\pdfoutput=1
\RequirePackage{ifpdf}
\ifpdf 
\documentclass[pdftex]{sigma}
\else
\documentclass{sigma}
\fi

\numberwithin{equation}{section}

\usepackage{tikz}
\usetikzlibrary{positioning}

\newtheorem{thm}{Theorem}[section]

\newtheorem{lem}[thm]{Lemma}
\newtheorem{prop}[thm]{Proposition}
\newtheorem{introthm}[thm]{Theorem}

\newtheorem{introcor}[thm]{Corollary}

\theoremstyle{definition}
\newtheorem{dfn}[thm]{Definition}
\newtheorem{ex}[thm]{Example}
\newtheorem{rem}[thm]{Remark}

\newcommand\A{{\mathcal{A} }}
\newcommand\X{{\mathcal{X} }}

\newcommand\M{{\mathcal{M}}}

\newcommand\ve{{\varepsilon}}
\newcommand\bi{{\mathbf{s}}}
\newcommand{\bs}{\mathbf{s}}
\newcommand\bj{{\mathbf{t}}}
\newcommand\wG{{\widehat\Gamma}}
\newcommand\wE{{\widehat{\mathbb{E}}}}
\newcommand\wM{{\widehat{\mathcal{M}}}}
\newcommand\seq{{\sigma\mu_{i_k}\cdots\mu_{i_1}}}
\newcommand{\uf}{\mathrm{uf}}

\renewcommand\epsilon{{\varepsilon}}

\begin{document}
\allowdisplaybreaks

\newcommand{\arXivNumber}{1711.07785}

\renewcommand{\PaperNumber}{025}

\FirstPageHeading

\ShortArticleName{Presentations of Cluster Modular Groups and Generation by Cluster Dehn Twists}

\ArticleName{Presentations of Cluster Modular Groups\\ and Generation by Cluster Dehn Twists}

\Author{Tsukasa ISHIBASHI}

\AuthorNameForHeading{T.~Ishibashi}

\Address{Graduate School of Mathematical Sciences, The University of Tokyo,\\ 3-8-1 Komaba, Meguro, Tokyo 153-8914, Japan}
\Email{\href{mailto:ishiba@ms.u-tokyo.ac.jp}{ishiba@ms.u-tokyo.ac.jp}}

\ArticleDates{Received January 01, 2020, in final form March 27, 2020; Published online April 07, 2020}

\Abstract{We give a method to compute presentations of saturated cluster modular groups. Using this, we obtain finite presentations of the saturated cluster modular groups of finite mutation type $X_6$ and $X_7$. We verify that the cluster modular groups of finite mutation type $\widetilde{E}_6$, $\widetilde{E}_7$, $\widetilde{E}_8$, $G_2^{(*,*)}$, $X_6$ and $X_7$ are virtually generated by cluster Dehn twists.}

\Keywords{cluster algebras; cluster modular groups; mapping class groups; quivers of finite mutation type}

\Classification{13F60; 05E15; 30F60}

\section{Introduction and main results}

A \emph{cluster modular group}, defined in~\cite{FG09}, is a group $\Gamma_{|\bi|}$ associated with a combinatorial data $\bi$ called a \emph{seed}. An element of the cluster modular group is a finite sequence of \emph{seed permutations} and \emph{mutations} which preserve the \emph{exchange matrix}, and two such sequences are identified if they induce the same pair of \emph{cluster transformations}. The cluster modular group acts on the \emph{cluster modular complex} $\M_{|\bi|}$. The quotient $\M_{|\bi|}/\Gamma_{|\bi|}$ is called the \emph{modular orbifold}, which can be considered as a combinatorial generalization of the moduli space of Riemann surfaces. Indeed, the modular orbifold coincides with the latter space when the seed is associated with an ideal triangulation of a closed surface with one puncture~\cite{Penner}. Therefore the structure of the cluster modular orbifold is of great interst, especially in the context of the \emph{higher Teichm\"uller theory}~\cite{FG06}. For example, a fundamental problem is to compute the rational cohomology groups of the modular orbifold, which coincide with those of its orbifold fundamental group. A presentation of the cluster modular group will provide useful information for the latter.

Once trying to find a presentation of the cluster modular group, one immediately encounters the difficulty which arises from the fact that a complete list of relations among the cluster transformations is not known in general. In simple cases they are exhaused by \emph{standard $(h+2)$-gon relations}~\cite{FG08} such as the involutivity and the \emph{pentagon relation}, while there are ``non-standard'' relations in general, even for those associated with marked surfaces \cite{FST08}. A nice survey on this problem is found in~\cite{Kim-Yamazaki}: not only an annoying thing is this, but also related to certain ``dualities'' between supersymmetric gauge theories.

In order to isolate such a problem, we consider the \emph{saturated cluster modular group} $\widehat{\Gamma}_{\bs}$ \cite{FG08} instead. It is defined by restricting the relations among cluster transformations to those generated by standard ones, so that the cluster modular group is obtained as a quotient of the saturated cluster modular group.
The saturated cluster modular group is basically easier than the cluster modular group to deal with, and already considered in several studies:
\begin{itemize}\itemsep=0pt
\item The fundamental group of the modular orbifold $\M_{|\bi|}/\Gamma_{|\bi|}$ is actually isomorphic to the saturated cluster modular group~\cite{FG09}.
\item The Fock--Goncharov quantization provides a projective unitary representation of the saturated cluster modular group~\cite[Theorem~5.5]{FG08}.
\item The Kato--Terashima partition $q$-series~\cite{KT15} gives a map from the saturated cluster modular group to the ring $\mathbb{Z}\big[\big[q^{1/\Delta}\big]\big]$ of formal power series for some integer~$\Delta$.
\end{itemize}
In this paper, we introduce a simplicial complex called the \emph{saturated modular complex} on which the saturated cluster modular group faithfully acts, and show that it is simply-connected. Then we can utilize a method established by Brown~\cite{Brown84} to obtain a presentation of the saturated cluster modular group from the data of this action. When the seed is of \emph{finite mutation type}, namely the mutation class of the exchange matrix is a finite set, this method works particularly well. In this case
the ``fundamental domain'' of the saturated modular complex is finite, and we can obtain a finite presentation of the saturated cluster modular group. The mutation classes of finite mutation type has been completely classified in~\cite{FeST, FeST12}: see Theorems~\ref{thm:FSTskewsym} and~\ref{thm:FSTgeneral}. In the case of skew-symmetric exchange matrices, the list consists of the mutation classes associated with marked surfaces, several classes associated with generalized Dynkin diagrams, and two mysterious classes called~$X_6$ and~$X_7$. The initial quivers of type~$X_6$ and~$X_7$ are shown in Fig.~\ref{fig: X_6 and X_7}. Our main result is a computation of finite presentations of the saturated cluster modular groups of type~$X_6$ and~$X_7$.
\begin{figure}[t]\centering
\scalebox{0.9}{
\begin{tikzpicture}
\begin{scope}[>=latex]
\fill (0,0) circle(2pt) coordinate(A1) node[above]{$1$};
\fill (A1) ++(165: 2) circle(2pt) coordinate(B1) node[left]{$2$};
\fill (A1) ++(135: 2) circle(2pt) coordinate(C1) node[left]{$3$};
\fill (A1) ++(45: 2) circle(2pt) coordinate(D1) node[right]{$4$};
\fill (A1) ++(15: 2) circle(2pt) coordinate(E1) node[right]{$5$};
\fill (A1) ++(270: 2) circle(2pt) coordinate(F1) node[below]{$6$};

\draw[->,shorten >=2pt,shorten <=2pt] (A1) -- (B1) [thick];
\draw[->,double,shorten >=2pt,shorten <=2pt] (B1) -- (C1) [thick];
\draw[->,shorten >=2pt,shorten <=2pt] (C1) -- (A1) [thick];
\draw[->,shorten >=2pt,shorten <=2pt] (A1) -- (D1) [thick];
\draw[->,double,shorten >=2pt,shorten <=2pt] (D1) -- (E1) [thick];
\draw[->,shorten >=2pt,shorten <=2pt] (E1) -- (A1) [thick];
\draw[->,shorten >=2pt,shorten <=2pt] (A1) -- (F1) [thick];

\fill (6,0) circle(2pt) coordinate(A) node[above]{$0$};
\fill (A) ++(165: 2) circle(2pt) coordinate(B) node[left]{$1$};
\fill (A) ++(135: 2) circle(2pt) coordinate(C) node[left]{$2$};
\fill (A) ++(45: 2) circle(2pt) coordinate(D) node[right]{$3$};
\fill (A) ++(15: 2) circle(2pt) coordinate(E) node[right]{$4$};
\fill (A) ++(285: 2) circle(2pt) coordinate(F) node[below]{$5$};
\fill (A) ++(255: 2) circle(2pt) coordinate(G) node[below]{$6$};

\draw[->,shorten >=2pt,shorten <=2pt] (A) -- (B) [thick];
\draw[->,double,shorten >=2pt,shorten <=2pt] (B) -- (C) [thick];
\draw[->,shorten >=2pt,shorten <=2pt] (C) -- (A) [thick];
\draw[->,shorten >=2pt,shorten <=2pt] (A) -- (D) [thick];
\draw[->,double,shorten >=2pt,shorten <=2pt] (D) -- (E) [thick];
\draw[->,shorten >=2pt,shorten <=2pt] (E) -- (A) [thick];
\draw[->,shorten >=2pt,shorten <=2pt] (A) -- (F) [thick];
\draw[->,double,shorten >=2pt,shorten <=2pt] (F) -- (G) [thick];
\draw[->,shorten >=2pt,shorten <=2pt] (G) -- (A) [thick];

\end{scope}
\end{tikzpicture}
}
\caption{Quivers of type $X_6$ and $X_7$.}\label{fig: X_6 and X_7}
\end{figure}

\begin{introthm}\label{introthm: X_7}
The saturated cluster modular group $\wG_{X_7}$ of type $X_7$ is generated by elements $\psi_k$, $\phi_k$ for $k=1,3,5$ and the permutation group $\mathcal{S}_3$ of numbers $\{1,3,5\}$, and the complete set of relations among them is given as follows:
\begin{gather*}
\sigma\psi_k\sigma^{-1}=\psi_{\sigma(k)}, \qquad
\sigma\phi_k\sigma^{-1}=\phi_{\sigma(k)}\qquad \text{for $\sigma \in \mathcal{S}_3$, $k=1,3,5$}, \\
\phi_1\phi_3=\phi_3\phi_1, \qquad
\phi_1=\psi_1^2\sigma_{35},\qquad
1=\big(\psi_3^{-1}\psi_1\big)^2,\qquad
1=\sigma_{153}\psi_5\psi_1\psi_3\psi_5\psi_1,
\end{gather*}
and the usual relations of permutations. Here $\sigma_{ij}$ denotes the tranposition of $i$ and $j$ and $\sigma_{153}$ denotes the cyclic permutation $1 \mapsto 5 \mapsto 3 \mapsto 1$.
\end{introthm}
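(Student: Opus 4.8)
The plan is to apply the general machinery developed above. Since the saturated modular complex $\wM_{X_7}$ is simply connected, a presentation of $\wG_{X_7}$ can be read off from the quotient orbi-complex $\wM_{X_7}/\wG_{X_7}$ together with the stabilizers of its cells. The work splits into making this quotient explicit for the quiver $X_7$, feeding it into the extraction procedure to obtain a (redundant) presentation, and then simplifying to the stated form.

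First I would compute the mutation class of $X_7$ --- a short finite computation --- recording the isomorphism classes of quivers that occur, the automorphism group of each, and which mutation connects which classes; this pins down the $1$-skeleton of $\wM_{X_7}/\wG_{X_7}$. Fixing once and for all an explicit labelled representative of each class, and a distinguished isomorphism back to $X_7$ whenever the class admits one, turns any mutation loop based at $X_7$ into an element of $\wG_{X_7}$. The generators $\psi_k$ and $\phi_k$ ($k=1,3,5$) are then defined as specific short mutation loops through the odd vertex of the $k$-th blade, and the subgroup $\mathcal{S}_3\le\wG_{X_7}$ is identified with $\mathrm{Aut}(X_7)$, the group permuting the three blades; the conjugation relations $\sigma\psi_k\sigma^{-1}=\psi_{\sigma(k)}$ and $\sigma\phi_k\sigma^{-1}=\phi_{\sigma(k)}$ are then built in.

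Next I would enumerate the $2$-cells of $\wM_{X_7}$ up to the $\wG_{X_7}$-action: by construction these are the standard relations attached to the seeds of the mutation class --- the pentagon- and $h$-gon-type relations, the commutativity relations $\mu_i\mu_j=\mu_j\mu_i$ for non-adjacent $i,j$, the involutivity relations, and the relations forced by seed automorphisms. Choosing a spanning tree of the quotient $1$-skeleton, the general procedure outputs a presentation with one generator per non-tree edge together with generators of the vertex stabilizers, and with relators the $2$-cell orbits and the stabilizer relations. The remaining task is a Tietze reduction: eliminate the auxiliary generators (those coming from the central vertex and from the non-representative quiver), use a relation of the shape $\phi_1=\psi_1^2\sigma_{35}$ to express every $\phi_k$ through the $\psi_k$ and $\mathcal{S}_3$, and collapse the surviving relators onto the six displayed identities plus the usual relations of $\mathcal{S}_3$; one then checks directly --- e.g.\ by tracking the induced $\X$- or $\A$-transformation --- that each displayed relation indeed holds, and completeness of the list is guaranteed a priori by the simple connectivity of $\wM_{X_7}$.

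The conceptual input is thus already available, and the main obstacle is the finite but delicate combinatorics: enumerating all seeds and all $2$-cell orbits without omission, determining the precise permutations attached to $\psi_k$ and $\phi_k$, and running the Tietze reduction so that the machine-generated presentation collapses to exactly the stated one. Exploiting the $\mathcal{S}_3$-symmetry of $X_7$ throughout --- so that everything is reduced to a single representative blade --- is what keeps the computation tractable.
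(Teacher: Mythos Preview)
Your plan is essentially the paper's own proof: it applies Brown's theorem to the $\wG_{X_7}$-action on the simply-connected complex $\wM_{X_7}$, chooses the two-vertex tree $T$ in the quotient graph, takes as Brown generators the isotropy group $\mathcal{S}_3$ together with the two non-tree edge elements $\phi_1,\psi_1$ (then sets $\phi_k,\psi_k$ for $k=3,5$ as their $\mathcal{S}_3$-conjugates), and extracts the displayed relations from the four $2$-cell orbits (two squares $\tau_1,\tau_2$ and two pentagons $\tau_3,\tau_4$) together with the edge-isotropy relations. One caution: your suggestion to ``check directly by tracking the induced $\X$- or $\A$-transformation'' would only verify the relations in the quotient $\Gamma_{X_7}$, not in the saturated group $\wG_{X_7}$; the paper instead rewrites each face word as a single standard square or pentagon identity (e.g.\ $\mu_3\mu_1\mu_3\mu_1=1$ or $(1\ 4)\mu_1\mu_4\mu_1\mu_4\mu_1=1$), which is the correct check at the saturated level.
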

Some of these relations have geometric interpretations, which are studied in Appendix~\ref{mapping class groups}.

\begin{introthm}\label{introthm: X_6}
The saturated cluster modular group $\wG_{X_6}$ of type $X_6$ is generated by five elements $\alpha_1$, $\alpha_2$, $\beta_1$, $\beta_2$ and $\sigma$, and the complete set of relations among them is given as follows:
\begin{gather*}
 \sigma^2=1, \qquad
 \alpha_2 = \sigma \alpha_1 \sigma^{-1}, \qquad \beta_2 = \sigma \beta_1 \sigma^{-1}, \\
 \alpha_1\alpha_2 = \alpha_2\alpha_1, \qquad
 \beta_1^{-1}\alpha_2\beta_1 = \alpha_1, \qquad
 \big(\beta_2\beta_1^{-1}\big)^2 = 1, \\
 \big(\alpha_2\beta_2\beta_1^{-1}\alpha_1^{-1}\big)^3 = 1,\qquad
 \beta_2 (\beta_1\alpha_1)^{-1}\beta_2 = \operatorname{Ad}_{\alpha_2\beta_1} \sigma, \qquad
 \beta_1 = \alpha_1 \big(\beta_1 \alpha_2 \beta_1^{-1}\big) \alpha_2^{-1}\sigma^{-1}.
\end{gather*}
Here $\operatorname{Ad}_x y:=xyx^{-1}$ denotes the conjugation.
\end{introthm}

\begin{rem}[the group $\wG_{X_7}$ as an amalgamated product\footnote{The author thank an anonymous referee for their suggestion to get this kind of presentation.}]
From the presentation given in Theorem~\ref{introthm: X_7}, we can delete the generators $\phi_k$ for $k=1,3,5$ using the fourth relation.
Let $G$ be the group generated by the three elements $\psi_k$ for $k=1,3,5$ subject to the following relations:
\begin{gather}
 \psi_i\psi_j^{-1} = \psi_j\psi_i^{-1},\label{eq:amal_quasi_comm} \\
 \psi_k^2 [\psi_i,\psi_j] \psi_k^{-2} = \psi_k\psi_i\psi_j\psi_k\psi_i, \label{eq:amal_commutator}
\end{gather}
for $\{i,j,k\}=\{1,3,5\}$. Then we have an isomorphism $\mathcal{S}_3 \ast_{\mathbb{Z}/3} G \cong \wG_{X_7}$, where the amalgamation data is given by
\begin{alignat*}{3}
 & \mathbb{Z}/3 \to \mathcal{S}_3, \qquad && \overline{1} \mapsto \sigma_{153}, &\\
 & \mathbb{Z}/3 \to G, \qquad && \overline{1} \mapsto \psi_5\psi_1\psi_3\psi_5\psi_1.&
\end{alignat*}
Indeed, using the first relation in Theorem~\ref{introthm: X_7} we can move all the generators from $\mathcal{S}_3$ to the left and get $\wG_{X_7}=\mathcal{S}_3\cdot \langle \psi_1,\psi_3,\psi_5\rangle$. The fifth relation is equivalent to $\psi_1\psi_3^{-1} = \psi_3\psi_1^{-1}$, which holds among the $\psi$'s and leads to the defining relation~\eqref{eq:amal_quasi_comm}. The third relation (``the $\phi$'s commute with each other'') is rewritten as
\begin{gather*}
 \sigma_{153} = \psi_5^2\psi_1^2\psi_3^{-2}\psi_5^{-2}
 = \psi_5^2\psi_1\psi_3\psi_1^{-1}\psi_3^{-1}\psi_5^{-2}
 =\psi_5^2[\psi_1,\psi_3]\psi_5^{-2},
\end{gather*}
which leads to \eqref{eq:amal_commutator} and describes the intersection of $\mathcal{S}_3$ and $\langle \psi_1,\psi_3,\psi_5\rangle$, together with the sixth relation.
\end{rem}

{\bf First homology groups.}
As the first application, we compute the first homology groups of~$\wG_{X_6}$ and~$\wG_{X_7}$. Here the first homology group ($=$ abelianization) of a group~$G$ is defined to be $H_1(G; \mathbb{Z})=G/[G,G]$. Here we present the results with proofs based on Theorems~\ref{introthm: X_7} and~\ref{introthm: X_6}.

\begin{introcor}\label{X_7 abelian}
We have $H_1\big(\wG_{X_7}; \mathbb{Z}\big) \cong \mathbb{Z}/5 \times \mathbb{Z}/2$. The generators are the images of~$\psi_1$ and~$\sigma_{13}$.
\end{introcor}

\begin{proof}
It is well-known that the signature function gives an isomorphism $H_1(\mathfrak{S}_n; \mathbb{Z}) \xrightarrow{\sim} \mathbb{Z}/2$ for the symmetric group $\mathfrak{S}_n$ of degree $n$. From the first relation in Theorem~\ref{introthm: X_7} we get $\psi_1=\psi_3=\psi_5$ in the abelianization, and the last relation implies $\psi_1^5=1$.
\end{proof}

\begin{introcor}\label{X_6 abelian}
We have $H_1\big(\wG_{X_6}; \mathbb{Z}\big) \cong \mathbb{Z} \times \mathbb{Z}/2$. The generators are the images of $\alpha_1$ and $\beta_1$.
\end{introcor}

\begin{proof}
From the second and third relations in Theorem~\ref{introthm: X_6}, we get $\alpha_1=\alpha_2$ and $\beta_1=\beta_2$ in the abelianization. The last two relations together with $\sigma^2=1$ imply that $\sigma = \alpha_1^{-1}\beta_1$ and $\alpha_1^2 = \beta_1^2$.
\end{proof}

{\bf Cluster Dehn twists.}
Next let us turn our attention to the problem finding a good generators of cluster modular groups.
As a candidate for an appropriate class of generators, the \emph{cluster Dehn twists} has been introduced in the author's previous work~\cite{Ish}. He proved that the cluster Dehn twists have a similar dynamical behavior to that of Dehn twists in mapping class groups. In the case of mapping class groups, Dehn twists and half-twists are cluster Dehn twists.

It is a classical theorem in the Teichm\"uller theory that mapping class groups of marked surfaces are generated by Dehn twists and half-twists. See, for instance, \cite{FM}. The following is a~cluster algebraic generalization of this theorem, for several cases of finite mutation type:

\begin{introthm}\label{introthm: generation}
The cluster modular groups of finite mutation type $\widetilde{E}_6$, $\widetilde{E}_7$, $\widetilde{E}_8$, $G_2^{(*,*)}$ and $X_7$ are generated by finitely many cluster Dehn twists. The cluster modular group of type $X_6$ is virtually generated by four cluster Dehn twists.
\end{introthm}

For the former three cases, the theorem follows from the computation of the cluster modular groups given by Assem--Schiffler--Shramchenko~\cite{ASS} using the cluster categories. The saturated cluster modular group of type $G_2^{(*,*)}$ is computed by Fock--Goncharov~\cite{FG06a}. For the last two cases we use Theorems~\ref{introthm: X_7} and~\ref{introthm: X_6}.

We can also find cluster Dehn twists in the remaining cluster modular groups of finite mutation type, at least for skew-symmetric cases. Our general expectation is that any cluster modular group of finite mutation type is virtually generated by cluster Dehn twists. It will be especially interesting to study the cases $E_7^{(1,1)}$ and $E_8^{(1,1)}$, since they appear as the unfrozen parts of the quivers defining the higher Teichm\"uller theory~\cite{FG06} for certain polygon (as a marked surface) and a Lie algebra of type $A$. See Table~\ref{tab:finite mutation from higher Teich}. Their mutation classes consist of $506$ and $5739$ quivers, respectively.

\begin{table}[h]\centering
 \begin{tabular}{c|lllll}
 & 3-gon & 4-gon & 5-gon & 6-gon & 7-gon \\ \hline
 $\mathfrak{sl}_2$ & $\varnothing$ & $A_1$ & $A_2$ & $A_3$ & $A_4$ \\
 $\mathfrak{sl}_3$ & $A_1$ & $D_4$ & $E_7$ & $E_8^{(1,1)}$ & $\infty_{13}$ \\
 $\mathfrak{sl}_4$ & $A_3$ & $E_7^{(1,1)}$ & $\infty_{15}$ & $\infty_{21}$ & $\infty_{27}$ \\
 $\mathfrak{sl}_5$ & $D_6$ & $\infty_{16}$ & $\infty_{26}$ & $\infty_{36}$ & $\infty_{46}$ \\
 $\mathfrak{sl}_6$ & $E_8^{(1,1)}$ & $\infty_{25}$ & $\infty_{40}$ & $\infty_{55}$ & $\infty_{70}$ \\
 $\mathfrak{sl}_7$ & $\infty_{15}$ & $\infty_{36}$ & $\infty_{57}$ & $\infty_{78}$ & $\infty_{99}$ \\
 \end{tabular}
\caption{Type of the unfrozen part of the quiver defining the higher Teichm\"uller theory for the $(m+2)$-gon and the Lie algebra $\mathfrak{sl}_{n+1}$ with $m=1,\dots,5$ and $n=1,\dots,6$. Here $\varnothing$ denotes the empty quiver, and~$\infty_N$ denotes a mutation class of a quiver of infinite mutation type with~$N$ vertices.}\label{tab:finite mutation from higher Teich}
\end{table}

{\bf Organization of the paper.} In Section~\ref{section: definitions}, we recall the definition of the saturated cluster modular groups. We introduce the saturated modular complexes and prove that they are simply-connected. In Section~\ref{section: Brown}, we recall Brown's algorithm~\cite{Brown84} which enables us to compute a~presentation of a group acting on a CW complex. We investigate relevant properties of the action of the saturated cluster modular group on the saturated modular complex in full generality. In Section~\ref{section: proof}, we compute the presentations of the saturated cluster modular groups of type~$X_7$ and~$X_6$. In Section~\ref{sec:generation}, we give a proof of Theorem~\ref{introthm: generation}.

\section{Basic definitions}\label{section: definitions}

\subsection{Seed mutations}
In this section we review seed mutations, in order to motivate the definitions of the saturated cluster modular group and the saturated modular complex.
We follow the notations of~\cite{FG09}.

Let $I$ be a finite set.
A \emph{seed} is a tuple $\bi=(\ve,(A_i)_{i \in I}, (X_i)_{i \in I})$, where $\ve=(\ve_{ij})_{i,j \in I}$ is a $\frac{1}{2}\mathbb{Z}$-valued skew-symmetrizable matrix, $(A_i)_{i \in I}$ and $(X_i)_{i \in I}$ are two bunches of algebraically independent commutative variables. Our convention of skew-symmetrization is that there exist positive integers $d_j$ ($j \in I$) such that the matrix $\big(\ve_{ij}d_j^{-1}\big)_{i,j}$ is skew-symmetric. The matrix~$\ve$ is called the \emph{exchange matrix} and the variables $(A_i)$ (resp.~$(X_i)$) are called the cluster $\A$- (resp.~$\X$-)variables.
We fix a subset $I_0 \subset I$ called the \emph{frozen subset}, and assume $\ve_{ij} \in \mathbb{Z}$ unless $(i,j) \in I_0 \times I_0$.

The exchange matrix $\ve$ can be represented by a weighted quiver without loops and 2-cycles. The corresponding quiver has the set of vertices $I$ and each vertex $i \in I$ is assigned a weight~$d_i$, and there exist $\big|d_j^{-1}\ve_{ij}\big|\gcd (d_i,d_j)$ arrows from the vertex $i$ to the vertex $j$ (resp.~$j$ to~$i$) if $\epsilon_{ij}>0$ (resp.~$\epsilon_{ij}<0$). See~\cite{Zickert} for details. We tacitly use this correspondence in the sequel. In this sense we call an element of $I$ a \emph{vertex} and elements of~$I_0$ are called \emph{frozen vertices}.

\begin{dfn}[seed mutations]
For a seed $\bi=(\ve,(A_i)_{i \in I}, (X_i)_{i \in I})$ and a vertex $k \in I \setminus I_0$, we define a new seed $\bi'=(\ve',(A'_i)_{i \in I}, (X'_i)_{i \in I})$ as follows:
\begin{gather}
\ve'_{ij}:=
\begin{cases}
-\ve_{ij} & \text{if $k \in \{ i, j\}$}, \vspace{2mm} \\
\ve_{ij} + \dfrac{|\ve_{ik}|\ve_{kj}+ \ve_{ik}|\ve_{kj}|}{2} & \text{ otherwise},
\end{cases} \label{e-mutation} \\
A_i':=
\begin{cases}
\displaystyle A_i^{-1}\bigg(\prod_{\ve_{kj}>0} A_j^{\ve_{kj}} +
 \prod_{\ve_{kj}<0} A_j^{-\ve_{kj}}\bigg) & \text{if $i=k$}, \\
A_i & \text{otherwise},
\end{cases}\label{A-mutation} \\
X_i':=
\begin{cases}
X_k^{-1} & \text{if $i=k$}, \\
X_i\big(1+ X_k^{-\operatorname{sgn}(\ve_{ik})}\big)^{-\ve_{ik}} & \text{otherwise}.
\end{cases} \label{X-mutation}
\end{gather}
We write $\bi' = \mu_k(\bi)$ and refer to this transformation of seeds as the \emph{mutation directed to the vertex~$k$}. The transformation~\eqref{e-mutation} is independent of the other data and called the \emph{matrix mutation} (or \emph{quiver mutation}). The rational transformations~\eqref{A-mutation} and~\eqref{X-mutation} are called the \emph{cluster $\A$-} and \emph{$\X$-transformations}.
\end{dfn}

A \emph{seed permutation} is a permutation $\sigma$ of $I$ which preserves $I_0$ setwise. It acts on a seed as $\sigma(\ve,(A_i),(X_i)) = (\ve',(A'_i),(X'_i))$, where
\[
\ve'_{ij}:=\ve_{\sigma^{-1}(i),\sigma^{-1}(j)},\qquad A'_i:=A_{\sigma^{-1}(i)},\qquad X'_i:=X_{\sigma^{-1}(i)}.
\]
It is called a \emph{seed isomorphism} if it satisfies $\ve'=\ve$. A \emph{mutation sequence} is a finite composition of mutations and seed permutations. A mutation sequence is called a \emph{mutation loop} if it preserves the initial exchange matrix.
It is said to be \emph{trivial} if it also preserves the initial cluster variables.
The set
\begin{align*}
 |\bi|:=\{ \phi(\bi) \,|\, \text{$\phi$: mutation sequence with the initial seed $\bi$} \}
\end{align*}
is called the \emph{mutation class} of $\bi$. The set $|\ve|$ of exchange matrices (or weighted quivers) appearing in $|\bi|$ is called the mutation class of $\ve$.

The \emph{cluster modular group} $\Gamma_\bi$ based at a seed $\bi$ is the group of mutation loops with the initial seed $\bi$, modulo trivial ones. Since the group structure of the cluster modular group only depends on the mutation class $|\bi|$, we will use the notation $\Gamma_{|\bi|}$ when no confusion can occur.

\begin{rem}
The cluster modular group acts on some geometric objects $\A_{|\bi|}$ and $\X_{|\bi|}$ called the \emph{cluster $\A$- and $\X$-varieties}, preserving their geometric structures. See~\cite{FG09} for details.
\end{rem}

\begin{ex}\label{ex:MCG}
Here is a geometric example. Let $\Sigma$ be a marked surface. It is a connected oriented compact 2-dimensional manifold with boundary equipped with a finite subset $M \subset \Sigma$ of marked points, satisfying some conditions. See \cite{FeST} for details. An ideal triangulation of $\Sigma$ is the isotopy class of a collection $\Delta=\{\alpha_i\}_{i=1}^{n(\Sigma)}$ of simple arcs whose endpoints are marked points, and the complement $\Sigma \setminus \bigcup_{i=1}^{n(\Sigma)}\alpha_i$ consists of triangles. Here $n(\Sigma)$ only depends on $\Sigma$.

Given such an ideal triangulation $\Delta$, we draw a quiver on each triangle as shown in Fig.~\ref{fig:triangle}. Gluing them along the edges, we get a quiver drawn on a surface. The vertices on $\partial \Sigma$ are declared to be frozen. For example, a torus with one marked point and its ideal triangulation yields a quiver whose exchange matrix is given by
\[
\ve=\begin{pmatrix}
0 & 2 & -2 \\
-2& 0 & 2 \\
2 & -2 & 0
\end{pmatrix}.
\]
Hence we can form a seed $\bi_\Delta$. Here cluster variables can be interpreted as coordinate functions on certain extensions of the Teichm\"uller space of $\Sigma$. See~\cite{Penner} for details. Except for a few small surfaces, we have
\[
\Gamma_{\bi_\Delta} \cong
	\begin{cases}
		{\rm MC}(\Sigma) & \text{if $\Sigma$ is a closed surface with one marked point},\\
		{\rm MC}(\Sigma) \ltimes \{\pm 1\}^p & \text{otherwise}.
	\end{cases}
\]
Here $p$ is the number of interior marked points (punctures) and ${\rm MC}(\Sigma)$ denotes the \emph{mapping class group} of $\Sigma$, which is the group of isotopy classes of orientation-preserving diffeomorphisms on $\Sigma$ that fix the subset $M$ setwise.
Its action on $\{\pm 1\}^p$ is induced by the permutation of punctures. See \cite{BS15,Ish}.
\end{ex}

\begin{figure}[t]\centering
\begin{tikzpicture}
\draw (0,0) to (2,0) to (1, 1.732) to (0,0);
{\color{red}
	\draw (1,0) circle(2pt) coordinate(A);
	\draw (1.5, 0.866) circle(2pt) coordinate(B);
	\draw (0.5, 0.866) circle(2pt) coordinate(C);
	\begin{scope}[>=latex]
		\draw[->,thick] (A) to (B);
		\draw[->,thick] (B) to (C);
		\draw[->,thick] (C) to (A);
	\end{scope}
}
\end{tikzpicture}
\caption{The quiver associated with a triangle.}\label{fig:triangle}
\end{figure}

The definition of the cluster modular group contains an equivalence relation which comes from those among mutations. In general it is difficult to list up the generators of relations, while we know the following ``standard'' ones:

\begin{lem}[standard $(h+2)$-gon relations~\cite{FG09}]\label{standard relations}
Let $\bi=(\ve,(A_i)_{i \in I}, (X_i)_{i \in I})$ be a seed and $(p,h)=(0,2)$, $(1,3)$, $(2,4)$ or $(3,6)$. Then for each $i, j \in I$ such that $\ve_{ij}=-p \ve_{ji}=p$, the mutation sequence $r_{ij}:=((i\ j)\mu_i)^{h+2}=((i\ j)\mu_j)^{h+2}$ is trivial. Here $(i\ j)$ denotes the seed permutation given by the transposition of $i$ and $j$.
\end{lem}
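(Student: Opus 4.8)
The plan is to reduce the assertion to the classical periodicity of rank-two cluster patterns of finite type. We must show that $r_{ij}$ preserves the exchange matrix $\ve$ (so that it is a mutation loop) and that it also preserves every $\A$- and $\X$-variable (so that this loop is trivial); the two displayed expressions for $r_{ij}$ are conjugate by $(i\ j)$, so once one of them is shown to be trivial, so is the other, and they coincide. It is convenient to first rewrite the sequence: from the identity $(i\ j)\mu_i(i\ j)=\mu_j$ one gets $((i\ j)\mu_i)^2=\mu_j\mu_i$, so $r_{ij}$ is the alternating mutation sequence at the vertices $i,j$ of length $h+2$ when $h+2$ is even, and is that same alternating sequence with one extra transposition $(i\ j)$ when $(p,h)=(1,3)$. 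Since $\mu_i$ and $\mu_j$ each negate $\ve_{ij}$ and $\ve_{ji}$, the composite $\mu_j\mu_i$ already restores the $2\times 2$ block $\bigl(\begin{smallmatrix}0&p\\-1&0\end{smallmatrix}\bigr)$, and a short check including the transposition shows that $r_{ij}$ restores it as well.

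Next I would localize. By \eqref{A-mutation} the variables $A_k$ with $k\notin\{i,j\}$ are untouched by $\mu_i$ and $\mu_j$; by \eqref{X-mutation} only the $X_k$ with $k$ adjacent to $i$ or $j$ are affected; and $(i\ j)$ merely relabels. Moreover \eqref{A-mutation} and \eqref{X-mutation} involve the $A$'s and the $X$'s separately, sharing only $\ve$, so the $\A$-part and $\X$-part of the statement can be treated independently given the $\ve$-part. It therefore suffices to prove that, after $h+2$ alternating mutations at $i$ and $j$, the pair $(A_i,A_j)$, the tuple consisting of $X_i,X_j$ and the adjacent $X_k$, and the matrix $\ve$ each return to their initial values. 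Freezing every vertex of $I\setminus\{i,j\}$ turns $(A_i,A_j)$, the pair $(X_i,X_j)$, and the block into the cluster $\A$-variables, $\X$-variables, and exchange matrix of a rank-two cluster pattern with coefficients of Cartan type $A_1\times A_1$, $A_2$, $B_2$ or $G_2$ according as $p=0,1,2,3$, with Coxeter number $h=2,3,4,6$. When $p=0$ this is immediate, since $\ve_{ij}=0$ forces $\mu_i$ and $\mu_j$ to commute and to be involutions, so $(\mu_j\mu_i)^2=\mathrm{id}$. For the other three types, the return of $(A_i,A_j)$ after exactly $h+2$ steps is the periodicity of rank-two cluster algebras with coefficients due to Fomin-Zelevinsky, and the return of $(X_i,X_j)$ is the Zamolodchikov periodicity of the associated rank-two $Y$-system, both of which I would quote.

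The step I expect to be the main obstacle is that this freezing accounts only for $A_i,A_j,X_i,X_j$ and the $\{i,j\}$-block, whereas the full seed also requires the remaining entries $\ve_{k\ell}$ $(k,\ell\notin\{i,j\})$ and the variables $X_k$ with $k$ adjacent to $i$ or $j$ to come back after $h+2$ steps. By \eqref{e-mutation} and \eqref{X-mutation} these are governed by the mixed terms $\tfrac12(|\ve_{ki}|\ve_{i\ell}+\ve_{ki}|\ve_{i\ell}|)$ and by the factors $(1+X_i^{-\mathrm{sign}\,\ve_{ki}})^{-\ve_{ki}}$, both built from the ``connecting'' entries $\ve_{ki},\ve_{kj}$. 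I would track these along the alternating sequence: for a fixed $k$ the pair $(\ve_{ki},\ve_{kj})$ evolves autonomously under $\mu_i,\mu_j$ by a rank-two recursion governed by the $\{i,j\}$-block --- precisely the recursion of the $c$-vectors of the finite-type rank-two pattern, which are easily listed --- so that its signs are controlled and it returns to its initial value after $h+2$ steps. Feeding this back, together with the rank-two $Y$-system periodicity, one checks that the successive corrections to each $\ve_{k\ell}$ cancel and that the successive correction factors applied to each $X_k$ telescope to $1$. This case-by-case bookkeeping for $p=0,1,2,3$ --- which is essentially carried out in \cite{FG09} --- is the only genuinely technical ingredient, and I would reproduce it or cite it.
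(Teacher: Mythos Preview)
The paper does not supply a proof of this lemma; it is stated as a citation to \cite{FG09} and used as a black box thereafter. So there is no ``paper's own proof'' to compare against. Your sketch is the standard reduction: rewrite $((i\ j)\mu_i)^{h+2}$ as an alternating mutation word at $i,j$ (with an extra transposition when $h+2$ is odd), invoke rank-two finite-type periodicity for the $\{i,j\}$-block and for $(A_i,A_j),(X_i,X_j)$, and then check separately that the connecting entries $\ve_{ki},\ve_{kj}$, the remaining $\ve_{k\ell}$, and the adjacent $X_k$ return after $h+2$ steps. This is exactly the computation carried out in \cite{FG09}, and you have correctly located the only nontrivial point, namely the bookkeeping for the off-block data, which indeed reduces to the rank-two $c$-vector recursion. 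One small caution: your remark that ``the two displayed expressions for $r_{ij}$ are conjugate by $(i\ j)$'' is true at the level of words, but conjugation by $(i\ j)$ also moves the basepoint seed; since both words are ultimately shown to act as the identity this is harmless, but it would be cleaner to simply note that the same rank-two computation applies verbatim with the roles of $i$ and $j$ exchanged.
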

We call these sequences $r_{ij}(=r_{ji})$ the \emph{standard sequences}. Note that if the exchange matrix is skew-symmetric, which is the case we treat mainly in this paper, we have only two of the standard relations:
\begin{itemize}\itemsep=0pt
\item $\mu_i\mu_j=\mu_j\mu_i$ if $\ve_{ij}=0$ (square relation).
\item $\mu_i\mu_j\mu_i\mu_j\mu_i=(i\ j)$ if $\ve_{ij}=\pm 1$ (pentagon relation).
\end{itemize}

\begin{ex}Here is a basic example of the pentagon relation in cluster modular group.
Let $I:=\{0,1\}$, $I_0:=\varnothing$ and consider a skew-symmetric exchange matrix
\[
\ve:= \begin{pmatrix}
0 & 1 \\ -1 & 0
 \end{pmatrix},
\]
which is called the exchange matrix of type $A_2$.
The mutation sequence $\phi:=(0\ 1)\mu_0$ gives an element of the cluster modular group. It turns out that it is the generator of the cluster modular group. The associated cluster transformations are given by
\begin{gather*}
\phi(A_0, A_1)=\left(A_1, \frac{1+A_1}{A_0}\right), \qquad
\phi(X_0, X_1)=\big(X_1(1+X_0), X_0^{-1}\big).
\end{gather*}
Then the pentagon relation implies that $\phi$ has order $5$, or one can check it by seeing the cluster transformations. In particular we have $\Gamma_{A_2} \cong \mathbb{Z}\slash 5$.
\end{ex}

\paragraph{\textbf{Cluster Dehn twists.}}
An element $\phi \in \Gamma_{|\bi|}$ of infinite order is called a \emph{cluster Dehn twist} if there exists a seed $\bj \in |\bi|$ such that $\phi^n=((i\ j)\mu_j)^\ell$ as an element of $\Gamma_\bj$ for some non-zero integers~$n$,~$\ell$ and vertices $i,j \in I-I_0$.
For example, $I:=\{0,1\}$, $I_0:=\varnothing$ and consider a~skew-symmetric exchange matrix
\[
\ve:= \begin{pmatrix}
0 & k \\ -k & 0
 \end{pmatrix}
\]
for an integer $k \geq 2$. Then the mutation sequence $\phi:=(0\ 1)\mu_0$ gives an element of the cluster modular group, and it is a cluster Dehn twist.

\subsection{Saturated cluster modular groups and saturated modular complexes}
In this section we recall the definition of the \emph{saturated cluster modular group}~\cite{FG08} (or the \emph{special cluster modular group}~\cite{FG09}). Also we introduce the \emph{saturated cluster complex} on which the saturated cluster modular group acts.

\begin{dfn}The \emph{saturated cluster modular group} $\widehat{\Gamma}_\ve$ based at an exchange matrix $\ve$ is the group of (matrix) mutation loops with the initial exchange matrix $\ve$, modulo the equivalence relation generated by:
\begin{enumerate}\itemsep=0pt
\item Involutivity: $\mu_k\mu_k=1$ for each $k \in I$,
\item Naturality: $\sigma\mu_k\sigma^{-1}=\mu_{\sigma(k)}$ for each $k \in I$ and a seed permutation $\sigma$, and
\item Standard relations: $r_{ij}=r_{ji}=1$ under the situation of Lemma~\ref{standard relations}.
\end{enumerate}
\end{dfn}
Take a seed $\bi=(\ve,(A_i)_{i \in I}, (X_i)_{i \in I})$ with the exchange matrix $\ve$.
Then Lemma~\ref{standard relations} implies that we have a surjective homomorphism $\widehat\Gamma_\ve \to \Gamma_\bi$.

To investigate the saturated cluster modular group, we define a 2-complex on which the saturated cluster modular group acts simplicially. Let $\ve=(\ve_{ij})_{i,j \in I}$ be an exchange matrix and set $n:=|I \setminus I_0|$. A labeled $n$-regular tree $T$ is an (infinite) $n$-valent tree with a labeling $\{\text{edges of $T$}\} \to \{1,\dots,n\}$ which induces a bijection $\{ \text{edges incident to $v$} \} \xrightarrow{\sim} \{1,\dots,n\}$ for each vertex $v$ of $T$.
Choose a vertex $v_0$ of $T$ and assign a matrix to each vertex of $T$ by the following rules:
\begin{itemize}\itemsep=0pt
\item
The matrix assigned to $v_0$ is the initial exchange matrix $\ve$.
\item
The matrices assigned to a pair of vertices connected by an edge labeled by $k$ are related by the matrix mutation $\mu_k$.
\end{itemize}
Let $T_{|\ve|}$ denotes the $n$-regular tree equipped with such an assignment of exchange matrices on the vertices. Let $D$ be the subgroup of $\operatorname{Aut}(T_{|\ve|})$ which consists of elements which preserve the assigned matrices.

\begin{lem} We have a natural surjective homomorphism $\Phi_{v_0}\colon D \to \widehat\Gamma_{\ve}$.
\end{lem}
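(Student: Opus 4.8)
The plan is to read off, from the way an element $g\in D$ moves the base vertex $v_0$ through $T_{|\ve|}$, a matrix mutation loop at $\ve$, and to check that this assignment descends to a homomorphism onto $\wG_\ve$. So first I would construct the map: given $g\in D$, since $g$ is an automorphism of $T_{|\ve|}$ respecting both the labelling and the matrix assignment, the vertex $g(v_0)$ carries the matrix $\ve$. Let $i_1,\dots,i_\ell$ be the labels, read outward from $v_0$, of the edges of the unique reduced path in $T_{|\ve|}$ from $v_0$ to $g(v_0)$. Because consecutive vertices along this path have matrices related by the corresponding $\mu_{i_s}$, the mutation sequence $\mu(g):=\mu_{i_\ell}\cdots\mu_{i_1}$ carries $\ve$ to the matrix at $g(v_0)$, namely $\ve$ again; hence $\mu(g)$ is a matrix mutation loop, and we set $\Phi_{v_0}(g):=[\mu(g)]\in\wG_\ve$. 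No choice was made, so this is well defined; note also that $D$ acts freely on $T_{|\ve|}$, because a label-preserving automorphism fixing a vertex fixes every vertex, and this is what makes the bookkeeping below go through cleanly.

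For the homomorphism property, for $g,h\in D$ the reduced path from $v_0$ to $(gh)(v_0)$ is obtained by concatenating the path from $v_0$ to $g(v_0)$ with the $g$-image of the path from $v_0$ to $h(v_0)$ and then cancelling backtracking. Since $g$ preserves the labelling, the $g$-image is read by the word $\mu(h)$, and cancelling a backtracking pair amounts to erasing a factor $\mu_k\mu_k$, which is trivial in $\wG_\ve$ by the involutivity relation. Hence $\mu(gh)$ and $\mu(g)\,\mu(h)$ agree in $\wG_\ve$, so $\Phi_{v_0}$ is multiplicative. The naturality relation plays the analogous role if one allows elements of $D$ to carry a twist of the labelling, and it also shows that replacing $v_0$ by another matrix-$\ve$ vertex merely conjugates $\Phi_{v_0}$ by the mutation loop joining the two vertices -- which is the sense in which $\Phi_{v_0}$ is natural.

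For surjectivity it suffices to hit a generating set of $\wG_\ve$. A general matrix mutation loop is a product of $\mu_k$'s and seed permutations; pushing the permutations to one end using naturality, it becomes $\sigma\cdot u$ with $u$ a pure mutation word and $\sigma$ a seed isomorphism carrying $u(\ve)$ back to $\ve$. The standard relations let one rewrite such a $\sigma$ as a pure mutation word -- for instance $(i\ j)=\mu_i\mu_j\mu_i\mu_j\mu_i$ across a simple arrow -- and, since the quivers under consideration are connected, this is enough to express every $\sigma$ through mutations; thus $\wG_\ve$ is generated by pure mutation loops. A pure mutation loop $\mu_{j_m}\cdots\mu_{j_1}$ traces a walk in $T_{|\ve|}$ starting at $v_0$ and ending at a vertex $v'$ with matrix $\ve$, and the unique label-preserving automorphism $g$ of $T_{|\ve|}$ with $g(v_0)=v'$ preserves the matrix assignment, since the matrices at all vertices are propagated from $\ve$ by the same mutations. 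Therefore $g\in D$ and $\Phi_{v_0}(g)=[\mu_{j_m}\cdots\mu_{j_1}]$ after removing backtracking, so $\Phi_{v_0}$ is onto.

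I expect the genuine work to lie in the surjectivity, and within it in the passage from a general matrix mutation loop to a pure one: this is precisely the point at which the standard relations and the connectedness of the quiver are indispensable, and it is the combinatorial counterpart of the classical fact that the relevant fundamental group is generated by its ``elementary'' loops. Everything else -- well-definedness and multiplicativity -- follows directly from $T_{|\ve|}$ being a tree on which $D$ acts freely, together with the involutivity and naturality relations.
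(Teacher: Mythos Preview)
Your construction assumes that elements of $D$ preserve the edge labelling (you write ``respecting both the labelling and the matrix assignment'' and deduce that $D$ acts freely). In the paper, $D$ consists of tree automorphisms preserving only the matrix assignment; such a $\gamma$ may permute the edge-labels at $v_0$ by a seed isomorphism $\sigma(\gamma)$, and the paper sets $\Phi_{v_0}(\gamma)=\sigma(\gamma)\,\mu_{i_k}\cdots\mu_{i_1}$, with the path read from $v_0$ to $\gamma^{-1}(v_0)$. Surjectivity is then immediate: given any loop $\sigma\,\mu_{i_k}\cdots\mu_{i_1}\in\widehat\Gamma_\ve$, walk along the labelled path and then twist by $\sigma$ to produce a preimage in $D$.

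With your smaller $D$, the image of $\Phi_{v_0}$ consists only of \emph{pure} mutation loops, and your surjectivity step needs every seed isomorphism to equal such a loop in $\widehat\Gamma_\ve$. This fails in general. Take the Markov quiver (three vertices, all arrows double): no pair of vertices satisfies $|\ve_{ij}|\le 1$, so there are no standard relations at all, and the assignment $\sigma\,\mu_{i_k}\cdots\mu_{i_1}\mapsto\sigma$ descends to a well-defined surjection $\widehat\Gamma_\ve\to\mathfrak{S}_3$ whose kernel is exactly the subgroup of pure mutation loops. Hence the nontrivial seed isomorphisms lie outside your image, and connectedness of the quiver does not rescue the argument. (A minor further slip: with the path taken to $g(v_0)$ rather than $g^{-1}(v_0)$, your concatenation actually yields $\mu(gh)=\mu(h)\,\mu(g)$, an anti-homomorphism; this is easily repaired but worth noting.)
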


\begin{proof}
For an element $\gamma \in D$, the two matrices assigned to $v_0$ and $\gamma^{-1}(v_0)$ are the same. Let $p(\gamma)=e_{i_k}\dots e_{i_1}$ be the unique edge path in $T_{|\ve|}$ from $v_0$ to $\gamma^{-1}(v_0)$, where an edge $e_i$ has the labeling $i$. Then there exists a seed isomorphism $\sigma=\sigma(\gamma)$ from $\ve$ to $\mu_{i_k}\dots\mu_{i_1}(\ve)$ such that $\gamma(e_i)=e_{\sigma(i)}$ for all $i \in I$. For another element $\gamma' \in D$, let $p(\gamma')=e_{j_l}\dots e_{j_1}$ be the edge path from $v_0$ to $\gamma'^{-1}(v_0)$.
Then $\gamma^{-1}(p(\gamma'))=e_{\sigma^{-1}(j_l)}\dots e_{\sigma^{-1}(j_1)}$ is the edge path from $\gamma^{-1}(v_0)$ to $(\gamma'\gamma)^{-1}(v_0)$.
Hence we have
\begin{align*}
\Phi_{v_0}(\gamma')\Phi_{v_0}(\gamma)
	&=\sigma(\gamma')\mu_{j_l}\cdots \mu_{j_1}\sigma(\gamma)\mu_{i_k}\cdots \mu_{i_1}
	 =\sigma(\gamma')\sigma(\gamma)\mu_{\sigma^{-1}(j_l)}\cdots \mu_{\sigma^{-1}(j_1)}\mu_{i_k}\cdots \mu_{i_1} \\
	&=\sigma(\gamma'\gamma)\mu_{\sigma^{-1}(j_l)}\cdots \mu_{\sigma^{-1}(j_1)}\mu_{i_k}\cdots \mu_{i_1}
	 =\Phi_{v_0}(\gamma'\gamma).
\end{align*}

Thus the map $\Phi_{v_0}\colon D \to \widehat\Gamma_{\ve}$ given by $\gamma \mapsto \seq$ is a group homomorphism. It is clearly surjective.
\end{proof}

The graph $\wE_{|\ve|}:=T_{|\ve|}/\operatorname{ker}\Phi_{v_0}$ is called the \emph{saturated exchange graph}. The saturated cluster modular group acts on $\wE_{|\ve|}$ as graph automorphisms via $\Phi_{v_0}$. We write $\widehat\Gamma_{v_0}:=\Phi_{v_0}^{-1}\big(\widehat\Gamma_{\ve}\big) \subset \operatorname{Aut}\big(\wE_{|\ve|}\big)$.
\begin{rem}\label{rem:satexgraph}\quad
\begin{enumerate}\itemsep=0pt
\item
An element $\gamma$ of $D$ belongs to the subgroup $\operatorname{ker}\Phi_{v_0}$ if and only if there exists a vertex $v$ equipped with $\ve$ and the edge path from $v$ to $\gamma(v)$ is given by a concatenation of standard sequences.
\item
If we change the basepoint of $T$ to a vertex $v_1$ which is connected with $v_0$ by an edge $e_k$, then we have $\Phi_{v_1} = \operatorname{Ad}_{\mu_k} \circ \Phi_{v_0}\colon D \to \widehat\Gamma_{\mu_k(\ve)}$. Here $\operatorname{Ad}_{\mu_k}$ denotes the conjugation by $\mu_k$.
\end{enumerate}
\end{rem}

The \emph{exchange graph} of the mutation class $|\bi|$ is defined as follows \cite{FZ07}. Starting with a $n$-regular tree $T$ with a fixed vertex $v_0$, we assign a seed to each vertex of $T$ by the following rules:
\begin{itemize}\itemsep=0pt
\item
The seed assigned to $v_0$ is the initial seed $\bi$.
\item
The seeds assigned to a pair of vertices connected by an edge labeled by $k$ are related by the seed mutation $\mu_k$.
\end{itemize}
We identify two vertices with the same seeds, and the resulting graph $\mathbb{E}_{|\bi|}$ is the exchange graph. From Lemma~\ref{standard relations} and Remark~\ref{rem:satexgraph} we have a natural covering map $\wE_{|\ve|} \to \mathbb{E}_{|\bi|}$.
\begin{lem}
The group homomorphism $\widehat\Gamma_{|\ve|} \to \Gamma_{|\bi|}$ is an isomorphism if and only if the covering map $\wE_{|\ve|} \to \mathbb{E}_{|\bi|}$ is a homeomorphism.
\end{lem}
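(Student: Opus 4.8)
The plan is to exhibit both $\wE_{|\ve|}$ and $\mathbb{E}_{|\bi|}$ as quotients of the tree $T_{|\ve|}$ — their common universal cover — by nested subgroups $N_1\subseteq N_2$ of its automorphism group, to recognize $N_1,N_2$ as the kernels of the two canonical surjections out of $D$, and thereby to reduce both sides of the stated equivalence to the single equality $N_1=N_2$. Concretely, by definition $\wG_{|\ve|}=D/N_1$ with $N_1:=\ker\Phi_{v_0}$, and $\wE_{|\ve|}=T_{|\ve|}/N_1$. Composing $\Phi_{v_0}$ with the canonical surjection $\wG_{|\ve|}\twoheadrightarrow\Gamma_{|\bi|}$ (which exists by \cref{standard relations}) yields a surjection $D\twoheadrightarrow\Gamma_{|\bi|}$; let $N_2\supseteq N_1$ be its kernel. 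Then the homomorphism of the statement is exactly the quotient map $D/N_1\twoheadrightarrow D/N_2$, so it is an isomorphism if and only if $N_1=N_2$.

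On the topological side, first note that a label-preserving automorphism of $T_{|\ve|}$ fixing a vertex is the identity, so $D$ — and a fortiori $N_1$ and $N_2$ — acts freely on $T_{|\ve|}$. Since $T_{|\ve|}$ is a tree it is simply connected, so $T_{|\ve|}\to\wE_{|\ve|}=T_{|\ve|}/N_1$ is the universal covering with deck group $N_1$, giving $\pi_1(\wE_{|\ve|})\cong N_1$. I claim the analogous identification $\mathbb{E}_{|\bi|}=T_{|\ve|}/N_2$ holds for the exchange graph. A deck transformation of $T_{|\ve|}\to\mathbb{E}_{|\bi|}$ is automatically label-preserving (it must commute with the projection, which respects the labelling via the $\mu_k$) and carries each vertex to one with the same seed; conversely, any $\gamma\in N_2$ carries each vertex to one with the same seed, for the image seed agrees with the original at $v_0$ by the definition of $N_2$, and then at every vertex by induction on the distance to $v_0$, mutation being deterministic along labelled edges. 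Hence the deck group of $T_{|\ve|}\to\mathbb{E}_{|\bi|}$ is $N_2$ and $\pi_1(\mathbb{E}_{|\bi|})\cong N_2$.

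Finally, the covering $q:\wE_{|\ve|}\to\mathbb{E}_{|\bi|}$ is the one obtained from the universal covering $T_{|\ve|}\to\mathbb{E}_{|\bi|}$ by dividing by $N_1\subseteq N_2$, so on fundamental groups it realizes the inclusion $N_1\hookrightarrow N_2$; its number of sheets is therefore $[N_2:N_1]$. Being a covering of the connected graph $\mathbb{E}_{|\bi|}$, it is a homeomorphism precisely when it has a single sheet, i.e. when $N_1=N_2$. Combined with the reduction of the first paragraph, this proves the lemma. The only step that is not purely formal is the identification $\mathbb{E}_{|\bi|}=T_{|\ve|}/N_2$; I expect the main (mild) obstacle to be the bookkeeping of the order of mutations and of the induced seed permutations along edge paths — exactly the bookkeeping already done in the proof that $\Phi_{v_0}$ is well defined — so I would treat that part as routine and cite it accordingly.
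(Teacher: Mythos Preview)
Your global strategy---realize both graphs as quotients $T_{|\ve|}/N_1$ and $T_{|\ve|}/N_2$ of the tree by nested normal subgroups of $D$, and reduce both sides of the equivalence to $N_1=N_2$---is sound, and indeed is a careful unpacking of the paper's proof. The paper itself dispatches the lemma in one sentence: both conditions are rephrasings of ``every relation among mutations is generated by the standard ones.'' That is essentially the equality $N_1=N_2$ stated without the covering-space language.

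However, two of your justifications are incorrect as written. First, $D$ does \emph{not} act freely on $T_{|\ve|}$: elements of $D$ are allowed to permute the edge labels, so a nontrivial seed automorphism of $\ve$ gives a nontrivial element of $D$ fixing $v_0$ (these are exactly the isotropy groups $G_v$ that appear later in the paper). What is true---and what you actually need---is that $N_2$ (hence $N_1$) acts freely: if $\gamma\in N_2$ fixes $v_0$ then $\Phi_{v_0}(\gamma)$ is a pure seed permutation which is trivial in $\Gamma_{|\bi|}$, and since a seed permutation fixing every cluster variable must be the identity, $\gamma=1$. Second, your claim that deck transformations of $T_{|\ve|}\to\mathbb{E}_{|\bi|}$ are label-preserving is false: the pentagon relation $((i\ j)\mu_i)^5=1$ already yields a deck transformation whose label permutation is the transposition $(i\ j)$. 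The edge labelling of $T_{|\ve|}$ does not descend to $\mathbb{E}_{|\bi|}$ (seeds in the exchange graph of \cite{FZ07} are taken up to relabelling), so this line of argument cannot work. The identification $\mathbb{E}_{|\bi|}=T_{|\ve|}/N_2$ is nonetheless correct; the right argument is that $\gamma\in N_2$ iff $\Phi_{v_0}(\gamma)=\sigma\mu_{i_k}\cdots\mu_{i_1}$ is a trivial mutation loop, iff the seed at $\gamma^{-1}(v_0)$ agrees with $\bi$ up to the relabelling $\sigma$---which is precisely the identification defining $\mathbb{E}_{|\bi|}$.
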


\begin{proof}
Both conditions are equivalent to the condition that relations among mutations are generated by standard ones.
\end{proof}

\begin{dfn}[saturated modular complex]
To each cycle in $\wE_{|\ve|}$ which is given by a standard sequence, we attach a 2-cell given by the corresponding standard polygon. The resulting 2-complex $\widehat\M_{|\ve|}$ is called the \emph{saturated modular complex}.
\end{dfn}
Here is a simple but important lemma:
\begin{lem}
The saturated modular complex $\widehat\M_{|\ve|}$ is simply-connected.
\end{lem}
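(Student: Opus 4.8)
The plan is to use that the labeled tree $T_{|\ve|}$ is simply-connected and to analyse the quotient map $q\colon T_{|\ve|}\to\wE_{|\ve|}$, which by definition is the quotient by the action on $T_{|\ve|}$ of the group $G:=\ker\Psi_{v_0}$ by graph automorphisms. (Subdividing $T_{|\ve|}$ once if necessary, we may assume this action has no edge inversions, so that $\wE_{|\ve|}$ is genuinely the quotient graph; for the quivers of interest this is automatic.) First I would pin down $\pi_1(\wE_{|\ve|},q(v_0))$. Since $q$ maps the star of each vertex of $T_{|\ve|}$ onto the star of its image, every edge loop in $\wE_{|\ve|}$ based at $q(v_0)$ lifts to an edge path in $T_{|\ve|}$ starting at $v_0$ and ending at a point of the fiber $q^{-1}(q(v_0))=G\cdot v_0$, and this lift is unique up to homotopy rel endpoints because $T_{|\ve|}$ is a tree. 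Writing $\ell_\gamma$ for the image under $q$ of the geodesic in $T_{|\ve|}$ from $v_0$ to $\gamma(v_0)$, it follows that every element of $\pi_1(\wE_{|\ve|},q(v_0))$ is represented by some $\ell_\gamma$ with $\gamma\in G$.

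Next I would show that each $[\ell_\gamma]$ lies in the normal subgroup $N$ of $\pi_1(\wE_{|\ve|},q(v_0))$ generated by the standard cycles. By \cref{rem:satexgraph}(1), since $\gamma\in G=\ker\Psi_{v_0}$ there is a vertex $v$ of $T_{|\ve|}$ carrying the exchange matrix $\ve$ such that the edge path from $v$ to $\gamma(v)$ is a concatenation $s_1\cdots s_m$ of standard sequences. Pick a path $\delta$ from $v_0$ to $v$ in $T_{|\ve|}$. Then $\delta\cdot(s_1\cdots s_m)\cdot(\gamma\delta)^{-1}$ is a path in $T_{|\ve|}$ from $v_0$ to $\gamma(v_0)$, hence homotopic rel endpoints to the geodesic; applying $q$ and using $q\circ\gamma=q$,
\[
\ell_\gamma\ \simeq\ q(\delta)\cdot q(s_1)\cdots q(s_m)\cdot q(\delta)^{-1}
\]
rel basepoint. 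Each $q(s_\ell)$ is a closed edge path: a standard sequence is a trivial mutation loop, so the automorphism of $T_{|\ve|}$ it induces lies in $G$ and carries the initial vertex of the $s_\ell$-path to its terminal vertex, which therefore have the same image in $\wE_{|\ve|}$; moreover these intermediate vertices all carry the matrix $\ve$, and by construction this closed edge path is exactly one of the cycles along which $\wM_{|\ve|}$ has a $2$-cell. Thus $[\ell_\gamma]$ is a conjugate of a product of standard cycles, so $[\ell_\gamma]\in N$. As the $[\ell_\gamma]$ generate $\pi_1(\wE_{|\ve|},q(v_0))$, we conclude $N=\pi_1(\wE_{|\ve|},q(v_0))$.

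Finally, $\wM_{|\ve|}$ is obtained from $\wE_{|\ve|}$ by attaching $2$-cells along exactly these standard cycles, so by van Kampen's theorem $\pi_1(\wM_{|\ve|})\cong\pi_1(\wE_{|\ve|})/N=1$; and $\wM_{|\ve|}$ is connected because $\wE_{|\ve|}$ is. Hence $\wM_{|\ve|}$ is simply-connected.

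I expect the main obstacle to be the bookkeeping implicit in the middle step. A standard sequence is written in \cref{standard relations} as $r_{ij}=((i\ j)\mu_i)^{h+2}$, which contains the seed permutation $(i\ j)$; such a permutation acts on $T_{|\ve|}$ by relabeling its edges rather than by traversing one, so one must extract the genuine underlying edge path -- for instance $(\mu_j\mu_i)^{(h+2)/2}$ when $h$ is even, and $\mu_j\mu_i\mu_j\mu_i\mu_j$ together with the relabeling $(i\ j)$ when $h=3$ -- check that it closes up in $\wE_{|\ve|}$ to the standard polygon bounding the attached $2$-cell, and confirm that the word ``concatenation'' in \cref{rem:satexgraph}(1) really does decompose $\gamma$ into such closed edge loops, up to the identifications defining $\wE_{|\ve|}$. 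Everything else is formal.
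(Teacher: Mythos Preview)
Your argument is correct and follows essentially the same approach as the paper: both invoke \cref{rem:satexgraph}(1) to decompose an arbitrary loop in $\wE_{|\ve|}$ into a product of standard cycles, each of which bounds a $2$-cell in $\wM_{|\ve|}$, so that the loop contracts. The paper's proof is a three-sentence sketch of exactly this idea, whereas you spell out the covering-space mechanism (the quotient $T_{|\ve|}\to\wE_{|\ve|}$ with deck group $\ker\Psi_{v_0}$) and the van~Kampen step explicitly; your closing paragraph correctly identifies the one nontrivial point, namely making precise what ``concatenation of standard sequences'' means as an edge path and why each piece descends to a boundary cycle of an attached $2$-cell.
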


\begin{proof}From Remark~\ref{rem:satexgraph}, each combinatorial cycle $\alpha$ in $\widehat\M_{|\ve|}$ is a concatenation of standard cycles. Since each standard cycle spans a 2-cell given by the corresponding standard polygon, $\alpha$ can be contracted to a point along these 2-cells. Therefore $\widehat\M_{|\ve|}$ is simply-connected.
\end{proof}

\subsection{Seeds of finite mutation type}
An exchange matrix $\ve$ is said to be of \emph{finite mutation type} if the mutation class $|\ve|$ is a finite set. If $\ve$ is mutation-equivalent to a Dynkin quiver, then it is said to be of \emph{finite type}. Otherwise it is of \emph{infinite type}.

\begin{thm}[Fomin--Zelevinsky \cite{FZ03}] For a seed $\bi=(\ve,(A_i)_{i \in I}, (X_i)_{i \in I})$, the underlying exchange matrix $\ve$ is of finite type if and only if the mutation class $|\bi|$ is a finite set.
\end{thm}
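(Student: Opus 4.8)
This is the Fomin--Zelevinsky finite type classification, so the plan is to reconstruct the two halves of their argument; I only describe the strategy. The statement is an equivalence, and both properties appearing in it --- finiteness of the seed mutation class $|\bi|$, and mutation-equivalence of $\ve$ to a Dynkin quiver --- are manifestly invariant along the mutation class, so it suffices to verify each implication for one convenient representative.

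For the implication \emph{finite type $\Rightarrow$ $|\bi|$ finite}, I would first replace $\ve$ by the Dynkin quiver to which it is mutation-equivalent, whose Cartan counterpart is then a Cartan matrix of finite type; let $\Phi$ be the associated finite root system, with simple roots $\alpha_i$ ($i \in I$), and set $\Phi_{\ge -1} := \Phi_{>0} \cup \{-\alpha_i \mid i \in I\}$, the (finite) set of almost positive roots. The plan is then: (i) attach to each cluster $\A$-variable its denominator vector relative to the initial cluster and show, by induction along mutations using the exchange relation \eqref{A-mutation}, that this vector always lies in $\Phi_{\ge -1}$ and that the assignment is injective; (ii) introduce the symmetric compatibility degree on $\Phi_{\ge -1}$, check that it is preserved under the combinatorial ``mutations'' of almost positive roots, and identify clusters with the maximal pairwise-compatible subsets of $\Phi_{\ge -1}$. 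Since $\Phi_{\ge -1}$ is finite there are only finitely many such subsets, hence finitely many clusters; and since the exchange matrix at a seed is determined by its cluster, only finitely many seeds. (In the skew-symmetric case one may instead invoke additive categorification: the cluster category of a Dynkin quiver has finitely many indecomposable objects by Gabriel's theorem, hence finitely many cluster-tilting objects, hence finitely many clusters.)

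For the converse \emph{$|\bi|$ finite $\Rightarrow$ finite type}, I would proceed as follows. If $|\bi|$ is finite then the set $|\ve|$ of exchange matrices is finite, so $\ve$ is of finite mutation type; moreover, restricting a seed to a subset $J$ of vertices (freezing the rest and mutating only within $J$) again yields a cluster algebra with finite seed mutation class, since its seeds are restrictions of the finitely many seeds of $|\bi|$. Taking $|J| = 2$ and doing the elementary analysis of the rank-$2$ exchange recurrence (equivalently, the orbit of the loop $(i\ j)\mu_i$) shows that a rank-$2$ exchange matrix with off-diagonal entries $b$ and $-c$ has finite seed mutation class exactly when $bc \le 3$ --- the types $A_1 \times A_1$, $A_2$, $B_2$, $G_2$. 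Hence $|\ve_{ij}\ve_{ji}| \le 3$ for all $i,j$ in every seed of the class. I would then run the Fomin--Zelevinsky classification of \emph{diagrams} (weighted oriented graphs): using that diagram mutation is a local operation on rank-$\le 3$ subdiagrams, one shows that a connected diagram of finite mutation type all of whose rank-$\le 3$ restrictions are of finite type must avoid an explicit finite list of forbidden subdiagrams, and an induction on the number of vertices then forces it to be mutation-equivalent to an orientation of a (possibly multiply-laced) Dynkin diagram.

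The step I expect to be the main obstacle is this last one: the classification of finite-type diagrams is a long and delicate case analysis on small-rank configurations and their mutations rather than a formal argument, and dually the construction of the root-system model for a Dynkin cluster algebra (the compatibility degree, the generalized associahedron) requires genuine combinatorial input. Both are carried out in full in \cite{FZ03}, which is why it is natural simply to cite that reference here.
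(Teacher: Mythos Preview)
The paper does not give its own proof of this theorem: it is quoted verbatim as a result of Fomin--Zelevinsky with the citation \cite{FZ03}, followed only by the remark that ``the latter condition is the original definition of a seed of finite type'' and that finiteness is determined by the exchange matrix. Your proposal is a faithful outline of the original Fomin--Zelevinsky argument (denominator vectors in $\Phi_{\ge -1}$ for one direction, the rank-$2$ bound $|\ve_{ij}\ve_{ji}|\le 3$ plus the diagram classification for the other), so there is no discrepancy to report --- you have simply supplied more detail than the paper, which defers entirely to the reference.
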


Indeed, the latter condition is the original definition of a seed of finite type. An important point is that whether a seed is of finite type is determined by its exchange matrix.
The following theorems give the classification of exchange matrices of finite mutation type.

\begin{thm}[Felikson--Shapiro--Tumarkin~\cite{FeST12}]\label{thm:FSTskewsym}
Any skew-symmetric exchange matrix with size $\geq 3$ of finite mutation type is either obtained by an ideal triangulation of a marked surface or contained in one of the eleven mutation classes: $E_6$, $E_7$, $E_8$, $\tilde{E}_6$, $\tilde{E}_7$, $\tilde{E}_8$, $E_6^{(1,1)}$, $E_7^{(1,1)}$, $E_8^{(1,1)}$, $X_6$, $X_7$.
\end{thm}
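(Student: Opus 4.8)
The plan is to follow the strategy of Felikson--Shapiro--Tumarkin, which combines a structural dichotomy with a bounded, computer-assisted search. Since $\ve$ is of finite mutation type exactly when $|\ve|$ is finite, and since the mutation class of an $n\times n$ integer matrix with uniformly bounded entries is automatically finite, the first and most substantial ingredient is the \emph{boundedness criterion}: a connected skew-symmetric $\ve$ of size $n\geq 3$ is of finite mutation type if and only if every matrix in $|\ve|$ has all entries of absolute value $\leq 2$ (equivalently, every quiver in $|\ve|$ has all edge multiplicities $\leq 2$). One direction is immediate from the finiteness of $|\ve|$. For the other I would first isolate the problem to rank $3$: a direct classification of skew-symmetric $3\times 3$ quivers shows that a cyclically oriented triangle carrying an edge of multiplicity $\geq 3$ produces, under repeated mutation, edges of arbitrarily large multiplicity, and that a non-cyclic triangle with a large edge either is already mutation-infinite or becomes such a bad cyclic triangle after one mutation. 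The general statement then follows once one checks that any multiplicity-$\geq 3$ configuration occurring inside a larger quiver in $|\ve|$ can be transported, by a sequence of mutations, onto a three-element full subquiver of that bad type.

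Granting the criterion, the classification splits along whether the quiver is \emph{block-decomposable}. Here I would invoke the theory of block decompositions of Fomin--Shapiro--Thurston: a quiver is block-decomposable --- glued from the finitely many elementary blocks along their outlets --- if and only if it is the adjacency quiver of some ideal triangulation of a bordered marked surface, and every block-decomposable quiver is of finite mutation type. Consequently the theorem reduces to showing that there are exactly eleven mutation classes of connected skew-symmetric matrices of size $\geq 3$ of finite mutation type that are \emph{not} block-decomposable, namely the listed ones. The key reduction lemma is: if $\ve$ is of finite mutation type and not block-decomposable, then $|\ve|$ contains a matrix possessing a full subquiver that is \emph{minimal} with these properties --- of finite mutation type, not block-decomposable, but with every proper full subquiver block-decomposable. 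This rests on the facts that full subquivers inherit finiteness of mutation type and that non-block-decomposability can be obstructed already on a full subquiver of bounded rank.

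Next I would classify the minimal non-block-decomposable quivers of finite mutation type. Since their multiplicities are $\leq 2$ and (as one shows) their rank is bounded by a small explicit number, this is a finite search: enumerate quivers of small rank with multiplicities $\leq 2$, discard those that are block-decomposable or mutation-infinite (the latter tested by exploring the finite mutation class), and impose minimality by checking all proper full subquivers. This leaves an explicit short list of minimal quivers --- among them a suitably oriented $E_6$, the rank-$6$ quiver $X_6$, the rank-$7$ quiver $X_7$, and certain affine diagrams. Finally I would run the \emph{completion} step: starting from each minimal quiver, repeatedly adjoin one vertex in all ways compatible with multiplicities $\leq 2$, keep only the results that remain of finite mutation type and non-block-decomposable, up to mutation equivalence, and iterate. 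The process terminates after finitely many steps (the largest rank that arises is $10$, for $E_8^{(1,1)}$) and yields exactly the eleven mutation classes $E_6$, $E_7$, $E_8$, $\tilde{E}_6$, $\tilde{E}_7$, $\tilde{E}_8$, $E_6^{(1,1)}$, $E_7^{(1,1)}$, $E_8^{(1,1)}$, $X_6$, $X_7$.

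The hard part is twofold. Conceptually, the real structural work is the boundedness criterion --- controlling multiplicity growth under arbitrary mutation sequences is delicate --- and the ``minimal subquiver'' reduction, which must show that non-block-decomposability, unlike finiteness of mutation type, though not obviously local, is nonetheless detected on bounded rank. Computationally, the enumerations in the last two steps are large and error-prone: one must organize them so that finiteness of mutation type, block-decomposability, and mutation equivalence are each decidable algorithmically, and one must verify that the rank bound used to make the search finite is genuinely valid. It is exactly these searches that benefit from the machine computations referenced in the cited work.
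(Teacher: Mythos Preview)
The paper does not prove this theorem: it is quoted verbatim from Felikson--Shapiro--Tumarkin \cite{FeST12} as an external input, with no argument supplied beyond the citation. So there is no ``paper's own proof'' to compare against; the author simply uses the classification as a black box to motivate the study of $X_6$ and $X_7$.

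That said, your outline is a faithful summary of the strategy in \cite{FeST12}: the boundedness criterion (all edge multiplicities $\leq 2$ throughout the mutation class, for rank $\geq 3$), the dichotomy into block-decomposable (hence surface) quivers versus exceptional ones, the reduction to minimal non-decomposable subquivers, and the finite computer-assisted enumeration and completion. If you were writing this up as an actual proof rather than a plan, the places that would need real work are exactly the ones you flag: the rank-$3$ analysis behind the boundedness criterion requires a careful case split (not every large-multiplicity triangle blows up in the same way, and one must track how a bad edge inside a larger quiver can be isolated on a rank-$3$ full subquiver after mutating), and the claim that minimal non-block-decomposable quivers have uniformly bounded rank is not obvious a priori and in \cite{FeST12} is established through the enumeration itself rather than by an independent a priori bound. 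But for the purposes of this paper none of this is needed --- a citation suffices.
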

The quivers representing the eleven mutation classes are given in Fig.~6.1 of~\cite{FeST12}. The first three are of finite type.

\begin{thm}[Felikson--Shapiro--Tumarkin~\cite{FeST}]\label{thm:FSTgeneral}
An exchange matrix with size $\geq 3$ which is not skew-symmetric, is of finite mutation type if and only if it is s-decomposable or contained in one of the seven mutation classes: $\tilde{G}_2$, $F_4$, $\tilde{F}_4$, $G_2^{(*,+)}$, $G_2^{(*,*)}$, $F_4^{(*,+)}$, $F_4^{(*,*)}$
\end{thm}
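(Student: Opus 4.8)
The plan is to bootstrap from the skew-symmetric classification \cref{thm:FSTskewsym} via the technique of \emph{unfoldings}, supplemented by a direct analysis in low rank. The first observation is that finite mutation type is inherited by principal submatrices: if $J\subseteq I$ and $\ve|_J$ denotes the restriction, then mutating $\ve$ at a vertex of $J$ and restricting commutes with mutating $\ve|_J$, so the mutation class of $\ve|_J$ embeds into the finite set $\{\ve'|_J\mid \ve'\in|\ve|\}$. In particular every rank-$2$ and rank-$3$ principal submatrix of a finite-mutation-type matrix is again of finite mutation type, so one begins by classifying those: all $2\times 2$ skew-symmetrizable matrices are of finite mutation type, while a short direct computation — bounding the entries that can occur and running the finitely many resulting mutation classes — produces the complete list of $3\times 3$ skew-symmetrizable matrices of finite mutation type. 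This list is the combinatorial engine of the whole argument.

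For the forward implication, let $\ve$ have size $\geq 3$ and be of finite mutation type. One first isolates the small exceptions — a handful of rank-$3$ matrices (those of type $\tilde G_2$ and its relatives) that are of finite mutation type but carry no usable unfolding — and checks them by hand; these contribute directly to the list of seven classes. For the remaining $\ve$ the core step is to construct a skew-symmetric \emph{unfolding} $\tilde\ve$: a skew-symmetric matrix indexed by a covering $\tilde I\to I$ together with a compatible finite group action whose quotient recovers $\ve$ and whose orbit-mutations project to the mutations of $\ve$. One must show (i) that such a $\tilde\ve$ exists for every finite-mutation-type $\ve$ outside the small exceptional list, built locally from the rank-$3$ classification and then glued; and (ii) that $\tilde\ve$ is itself of finite mutation type, which is delicate because a priori $|\tilde\ve|$ involves mutations at individual vertices, not whole orbits — one shows that, up to the group action, these single-vertex mutations never leave the orbit-mutation pattern. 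Granting this, \cref{thm:FSTskewsym} applies to $\tilde\ve$: either $\tilde\ve$ comes from an ideal triangulation of a marked surface, in which case the group action is realized by a symmetry of the surface and the quotient $\ve$ is precisely what is called \emph{s-decomposable} (glued from \emph{s-blocks} along vertices, mirroring the block decomposition of surface quivers); or $\tilde\ve$ lies in one of the eleven sporadic skew-symmetric classes, and bookkeeping of which of those admit the requisite symmetry, together with the corresponding foldings, produces exactly the classes $\tilde G_2$, $F_4$, $\tilde F_4$, $G_2^{(*,+)}$, $G_2^{(*,*)}$, $F_4^{(*,+)}$, $F_4^{(*,*)}$.

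For the converse one verifies that s-decomposable matrices and the seven sporadic classes are of finite mutation type. For the seven classes this is a finite computation: each mutation class is explicitly enumerated (for instance with the quiver-mutation software used elsewhere in this paper) and found to be finite. For s-decomposable matrices one argues structurally exactly as in the surface case: a matrix glued from a fixed multiset of \emph{s-blocks} along vertices has the property that any mutation again decomposes into the same collection of \emph{s-blocks} — the effect of a mutation is confined to the blocks meeting the mutated vertex and alters them among finitely many shapes — so $|\ve|$ lies inside the finite set of matrices admitting such a decomposition.

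The main obstacle is parts (i) and (ii) of the forward direction. Constructing the unfolding requires a careful local-to-global argument governed by the rank-$3$ list: one must check that the local unfoldings around each rank-$3$ piece are mutually consistent and assemble into a global covering. And proving $|\tilde\ve|$ finite requires showing that individual-vertex mutations of $\tilde\ve$ never escape the orbit-mutation pattern, which is exactly where the compatibility between folding and mutation must be exploited. Everything else — the rank-$2$ and rank-$3$ classifications, the low-rank exceptions, and the converse direction — is either a finite check or a routine structural induction.
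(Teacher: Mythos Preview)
The paper does not prove this theorem at all: it is stated as a cited classification result from Felikson--Shapiro--Tumarkin~\cite{FeST}, used purely as background for the finite-mutation-type landscape, and no argument (not even a sketch) is supplied in the present paper. So there is nothing here to compare your proposal against.

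That said, your outline is broadly faithful to the strategy of the original source~\cite{FeST}: the classification there does proceed by constructing unfoldings to skew-symmetric matrices, invoking the skew-symmetric classification of~\cite{FeST12} (your \cref{thm:FSTskewsym}), and identifying the s-decomposable matrices as those whose unfoldings come from surfaces while the remaining foldings of the exceptional skew-symmetric classes yield the seven listed mutation classes. Your identification of the delicate points --- existence of a global unfolding assembled from local rank-$3$ data, and the passage from orbit-mutations to arbitrary mutations when establishing that $\tilde\ve$ is of finite mutation type --- is accurate; these are indeed where the technical work in~\cite{FeST} lies. One refinement: in the actual argument the low-rank exceptions and the existence of unfoldings are handled somewhat more intricately than a single ``rank-$3$ list plus gluing'' step, and the fact that finite mutation type of $\ve$ forces finite mutation type of the unfolding $\tilde\ve$ is not proved in full generality but rather established case-by-case for the unfoldings that actually arise. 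If you intend to present this as a self-contained proof rather than a pointer to~\cite{FeST}, those steps would need to be filled in with references to the specific lemmas there.
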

The quivers representing the seven mutation classes are given in Fig.~1.1 of~\cite{FeST}. Only $F_4$ is of finite type. Here an exchange matrix is said to be \emph{s-decomposable} if it is obtained by gluing certain ``blocks'', see~\cite{FeST}.
We say that a seed $\bi=(\ve,(A_i)_{i \in I}, (X_i)_{i \in I})$ is of finite mutation type if the underlying exchange matrix is.
\begin{lem}If a seed $\bi$ is of finite mutation type and infinite type, then the cluster modular group $\Gamma_{|\bi|}$ is a finitely generated infinite group.
\end{lem}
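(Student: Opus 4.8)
The plan is to read both halves of the statement off the geometry already built in the excerpt: the action of $\widehat\Gamma_\ve$ on the simply-connected complex $\widehat\M_{|\ve|}$, together with the fact that $\Gamma_{|\bi|}$ is a quotient of $\widehat\Gamma_\ve$ acting on the exchange graph. For finite generation I would argue that $\widehat\Gamma_\ve$ itself is finitely generated (indeed finitely presented), and then pass to the quotient $\Gamma_{|\bi|}$. Since $\widehat\M_{|\ve|}$ is simply-connected, the finite presentability of $\widehat\Gamma_\ve$ will follow from Brown's algorithm (\cref{section: Brown}) as soon as I verify that the $\widehat\Gamma_\ve$-action on $\widehat\M_{|\ve|}$ has finitely many orbits of cells and that all cell stabilizers are finite --- and this is exactly where finite mutation type enters.

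The key point is a transitivity statement for the labeled tree $T_{|\ve|}$. I claim that $D \le \mathrm{Aut}(T_{|\ve|})$ acts transitively on the set of vertices bearing any fixed exchange matrix $\ve_0 \in |\ve|$. Given two such vertices $v,v'$, send the endpoint of the reduced edge-path out of $v$ spelling a word $w$ in the labels $\{1,\dots,n\}$ to the endpoint of the path out of $v'$ spelling the same $w$; this is a label-preserving tree automorphism, and it preserves the matrix assignment because the matrix at the end of a path depends only on the matrix at its start (here $\ve_0$ in both cases) and on the sequence of labels traversed. The same recipe gives finitely many $D$-orbits of edges. Since $\wE_{|\ve|}/\widehat\Gamma_\ve \cong T_{|\ve|}/D$ and $|\ve|$ is finite, $\wE_{|\ve|}$ has finitely many orbits of vertices and of edges; the $2$-cells of $\widehat\M_{|\ve|}$ sit on standard cycles, each of which meets a vertex and is determined by that vertex together with a pair $i,j$ of indices with $\ve_{ij}=-p\ve_{ji}=p$ as in \cref{standard relations}, so there are finitely many orbits of $2$-cells too. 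Finally, the stabilizer of a vertex of $\wE_{|\ve|}$ is a quotient of the $D$-stabilizer of a lift $v$, and the latter embeds into the symmetric group $S_n$, since a matrix-preserving automorphism of $T_{|\ve|}$ fixing $v$ and all edges at $v$ fixes $T_{|\ve|}$ pointwise; hence all cell stabilizers are finite. Brown's algorithm now produces a finite presentation of $\widehat\Gamma_\ve$, so $\widehat\Gamma_\ve$, and therefore its quotient $\Gamma_{|\bi|}$, is finitely generated.

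For infiniteness I would use that $\bi$ is of infinite type, so by the Fomin--Zelevinsky theorem quoted above the mutation class $|\bi|$ --- hence the vertex set of the exchange graph $\mathbb{E}_{|\bi|}$ --- is infinite. The cluster modular group $\Gamma_{|\bi|}$ acts on $\mathbb{E}_{|\bi|}$ (\cite{FG09}), and the map sending a seed to its underlying exchange matrix is $\Gamma_{|\bi|}$-invariant with finite image $|\ve|$. Each of its fibers is a single $\Gamma_{|\bi|}$-orbit: two seeds sharing an exchange matrix are joined by a mutation sequence, and composing this with a suitable seed isomorphism represents a mutation loop carrying one to the other. Thus $\Gamma_{|\bi|}$ acts on the infinite set $\mathbb{E}_{|\bi|}$ with only finitely many orbits, which is impossible for a finite group; hence $\Gamma_{|\bi|}$ is infinite.

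The main obstacle I expect is organizational rather than conceptual: making the $D$-transitivity argument on $T_{|\ve|}$ airtight (the dictionary between reduced edge-paths in the tree and words in the labels, and the behaviour of $D$ on labels), and, on the infiniteness side, being careful with the basepoint-dependence of the cluster modular group so that the assertion that $\Gamma_{|\bi|}$ acts on $\mathbb{E}_{|\bi|}$ with exchange-matrix fibers as orbits is justified without circularity. Verifying the finiteness of the relevant stabilizers (subgroups of $S_n$) and the count of $2$-cell orbits is then routine.
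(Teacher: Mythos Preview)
Your proposal is correct and follows essentially the same approach as the paper: finite generation comes from Brown's algorithm applied to the $\widehat\Gamma_\ve$-action on $\widehat\M_{|\ve|}$ once one knows there are finitely many cell-orbits (the paper packages this as \cref{lem:finitegraph}), and infiniteness comes from the fact that the fiber over $\ve$ in $\mathbb{E}_{|\bi|}$ is infinite and is a single $\Gamma_{|\bi|}$-orbit. The only cosmetic difference is that the paper phrases infiniteness by directly producing infinitely many distinct group elements (one per seed with matrix $\ve$) rather than via your orbit-counting formulation, but these are the same argument.
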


\begin{proof}In this case we have infinitely many seeds whose underlying exchange matrix coincides with the initial one~$\ve$. Each such seed gives rise to a distinct element of the cluster modular group, hence the cluster modular group is infinite. The cluster modular group is finitely generated by Lemma~\ref{lem:finitegraph}.
\end{proof}

\section{Brown's algorithm for a group acting on a CW complex}\label{section: Brown}
Let us recall Brown's algorithm~\cite{Brown84} with a suitable specialization for our purpose. Let~$G$ be a~group acting on a simply-connected CW complex $X$ preserving the cell structure. It is known that if the action has no fixed points, then $G \cong \pi_1(X)$. It is not true under existence of a fixed point (which corresponds to existence of a quiver isomorphism in our cluster setting). Brown's algorithm enables us to compute a presentation of~$G$ even in this situation.

\subsection{General setting}
We prepare some notations and terminology. The set of $k$-cells of a CW complex $X$ is denoted by $C_k(X)$. A 1-cell $\sigma \in C_1(X)$ is said to be \emph{inverted} if there exists an element $g \in G$ which fixes~$\sigma$ and reverses the orientation of~$\sigma$. If $\sigma$ is not a loop in~$X$, which is the case we will deal with, the condition means that $g$ transposes two endpoints of~$\sigma$. In the following, we will assume the following condition:
\begin{equation}\label{non-inverted}
\text{each 1-cell of $X$ has distinct endpoints and is not inverted.}
\end{equation}
A subtree $T$ of $X$ is called a \emph{tree of representatives} for $X$ mod $G$ if the vertices of $T$ form a set of representatives for the vertices of~$X$ mod~$G$.
Such a tree always exists and the 1-cells of~$T$ are inequivalent mod $G$. By an \emph{edge} of $X$ we mean an oriented 1-cell of~$X$. Let us denote the set of edges of~$X$ by~$E(X)$, which is again a $G$-set.

Let $\pi\colon E(X) \to C_1(X)$ be the projection forgetting the orientation of each 1-cell. A $G$-equivariant section $P\colon C_1(X) \to E(X)$ is called an \emph{orientation of the CW complex $X$}. Such an orientation exists under the assumption~(\ref{non-inverted}). The initial (resp. terminal) vertex of an edge $e \in E(X)$ is denoted by $o(e)$ (resp.~$t(e)$). We represent an edge~$e$ by the symbol $\big(o(e) \xrightarrow{e} t(e)\big)$.

For a 2-cell in $X$, the attaching map $\partial \Delta^2 \to X^{(1)}$ is represented by a combinatorial cycle $\tau=e_1\cdots e_n$ such that $o(e_1)=t(e_n) \in C_0(T)$. Here $\Delta^2$ denotes the standard 2-simplex, and we read the combinatorial cycle from the left to the right by convention.
\begin{dfn}\label{Brown data}
A choice of the following data $(T,P,E^+,F)$ is called a \emph{data of representatives} for the action of $G$ on $X$:
\begin{enumerate}\itemsep=0pt
\item $T \subseteq X^{(1)}$ is a tree of representatives for $X$ mod $G$.
\item $P\colon C_1(X) \to E(X)$ is an orientation of $X$.
\item $E^+ \subseteq P(C_1(X))$ is a set of representatives mod $G$ such that $P(C_1(T)) \subseteq E^+$ and $o(e) \in C_0(T)$ for each edge $e$ in $E^+$.
\item $F \subseteq C_2(X)$ is a set of representatives mod $G$ together with a choice of a combinatorial cycle representing the attaching map for each 2-cell.
\end{enumerate}
\end{dfn}

Fixing a data of representatives, we can compute a presentation of the group $G$. For each edge $e \in E^+$, there is a unique vertex $w_e \in C_0(T)$ which is $G$-equivalent to the terminal vertex~$t(e)$. Choose an element $g_e \in G$ such that
\begin{equation}\label{Brown generator}
t(e)=g_e(w_e).
\end{equation}
Here we choose $g_e:=1 \in G$ if $e$ is contained in $T$ by convention. These elements together with the elements of the isotropy group $G_v:=\{g \in G\,|\,g(v)=v\}$ ($v \in C_0(T)$) form a generating set of $G$. We call them \emph{Brown generators}. We shall describe the relations among the Brown generators. Note that each edge $e \in E(X)$ such that $v:=o(e) \in C_0(T)$ has one of the following forms:
\begin{enumerate}\itemsep=0pt
\item[(a)] $e=\big(v \xrightarrow{he} hg_ew_e\big)$, where $e \in E^+$ and $h \in G_v$.
\item[(b)] $e=\big(w_e \xrightarrow{hg_e^{-1}\bar{e}} hg_e^{-1}v\big)$, where $e \in E^+$, $\bar{e}$ is the opposite edge and $h \in G_{w_e}$.
\end{enumerate}
Then we set \[
g:=\begin{cases} hg_e & \text{(case (a))}, \\
hg_e^{-1} & \text{(case (b))}. \end{cases}
\]
For a combinatorial cycle $\tau=e_1\cdots e_n$ such that $o(e_1)=t(e_n) \in C_0(T)$, the above rule provides us with elements $g_1^\tau,\dots,g_n^\tau \in G$ such that $t(e_i) \in g_1^\tau\cdots g_{i}^\tau(C_0(T))$. Let $g^\tau:=g_1^\tau\cdots g_n^\tau$. Note that $g^\tau \in G_{o(e_1)}$.

\begin{thm}[Brown~\cite{Brown84}]\label{thm Brown}
Let $G$ be a group acting on a simply-connected CW complex $X$ preserving the cell structure. Assume the condition~\eqref{non-inverted}. Then $G$ is generated by the elements~$g_e$ $(e \in E^+)$ and the isotropy groups~$G_v$ $(v \in C_0(T))$, subject to the following relations:
\begin{enumerate}\itemsep=0pt
\item[$1.$] $g_e=1$ if $e$ is contained in $T$.
\item[$2.$] $g_e^{-1}i_e(g)g_e=c_e(g)$ for any $e \in E^+$ and $g \in G_e$, where $i_e\colon G_e \to G_{o(e)}$ is the inclusion and $c_e\colon G_e \to G_{w_e}$ is defined by $g \mapsto g_e^{-1}gg_e$.
\item[$3.$] $g^\tau=g_1^\tau\cdots g_n^\tau$ for any $\tau \in F$.
\end{enumerate}
\end{thm}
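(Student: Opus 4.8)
This is Brown's theorem, and I would prove it by the \emph{development} (equivariant covering-space) method. First I would reduce to the case $\dim X\le 2$: the fundamental group and the validity of the asserted presentation see only the $2$-skeleton, and simple-connectivity together with \eqref{non-inverted} pass to $X^{(2)}$. I would also assume, as one may after passing to the barycentric subdivision, that every cell of $X$ is fixed \emph{pointwise} by its stabilizer; this makes \eqref{non-inverted} automatic and rules out pathologies coming from a $2$-cell carrying an internal symmetry under its stabilizer.

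Let $\widehat{G}$ be the group \emph{defined} by the presentation in the statement, and let $\phi\colon\widehat{G}\to G$ send each generator to itself. This is well defined because relations (1)--(3) all hold in $G$: (1) and (2) are immediate from the relation $t(e)=g_e(w_e)$ and the definitions of $i_e,c_e$, while (3) holds because $g^\tau$ was \emph{defined} as the product $g_1^\tau\cdots g_n^\tau$, an element of $G_{o(e_1)}$ obtained by lifting the closed combinatorial cycle $\alpha$. Next I would note that $\phi$ is surjective, since the $g_e$ and the $G_v$ generate $G$ by the usual argument lifting edge paths along the connected $1$-skeleton $X^{(1)}$ (as in the construction of $p(\gamma)$ in the proof that $\Phi_{v_0}$ is a homomorphism). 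The key structural point, which drives everything, is that $\phi$ restricts to the identity inclusion on each vertex group $G_v$, and hence, since $G_e\le G_{o(e)}$ and $G_\tau\le G_{o(e_1)}$ (a cell stabilizer fixes the relevant endpoint, under our standing normalization), to the natural inclusion on $G_e$ and $G_\tau$ as well; in particular $\phi$ is \emph{injective on every cell stabilizer}.

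Then I would build the development $\widehat{X}$: a $2$-complex with a cellular $\widehat{G}$-action and a $\phi$-equivariant cellular map $p\colon\widehat{X}\to X$ whose $0$-cells are $\bigsqcup_{v\in C_0(T)}\widehat{G}/G_v$, with one $\widehat{G}$-orbit $\widehat{G}/G_e$ of $1$-cells for each $e\in E^+$, glued onto the $0$-skeleton by $\gamma G_e\mapsto\gamma G_{o(e)}$ at the $o(e)$-end and by $\gamma G_e\mapsto\gamma g_eG_{w_e}$ at the other end, and one $\widehat{G}$-orbit $\widehat{G}/G_\tau$ of $2$-cells for each $\tau\in F$, attached along the lift of $\alpha_\tau$ based at $1\cdot G_{o(e_1)}$. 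Relations (1) and (2) are exactly what make the edge-gluings consistent (indeed $c_e(G_e)\subseteq G_{w_e}$ is the well-definedness of the $t$-end gluing), and the face relation (3) is exactly what makes each attaching loop close up in $\widehat{X}^{(1)}$. By construction $p$ restricts to a bijection on each orbit of cells modulo the fibres of $\phi$, and induces an isomorphism $\widehat{X}/\ker\phi\cong X$ of $G$-complexes. Writing $K:=\ker\phi$, every cell stabilizer of $\widehat{X}$ is a $\widehat{G}$-conjugate of some $G_v$, $G_e$ or $G_\tau$, hence meets $K$ trivially by the key point above, so $K$ acts freely and cellularly on $\widehat{X}$ and $p\colon\widehat{X}\to\widehat{X}/K=X$ is a covering map. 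Since $\widehat{X}$ is connected — the lift of $T$ joins the base vertices $1\cdot G_v$, every generator of $\widehat{G}$ carries that component into itself, and $\widehat{G}$, whose quotient complex is the connected $X/G$, acts transitively on $\pi_0(\widehat{X})$ — and $X$ is simply connected, $p$ must be a homeomorphism; thus $K=1$ and $\phi$ is an isomorphism, which is the assertion.

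The hard part is the construction in the third paragraph: verifying that the development is well defined, i.e.\ that relations (1)--(3) are precisely what is needed to make the edge-gluings consistent and the $2$-cell attaching maps into genuine loops, together with the bookkeeping that $K$ acts freely on all of $\widehat{X}$. The only genuinely delicate input there is the pointwise-stabilizer normalization; it is handled by barycentric subdivision (which does not change the group and, after Tietze transformations, not the presentation), and is unnecessary once \eqref{non-inverted} holds and no $2$-cell is rotated by its stabilizer. Everything after $\widehat{X}$ is built is the routine covering-space endgame.
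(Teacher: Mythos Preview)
The paper does not supply its own proof of this statement: \cref{thm Brown} is quoted from Brown's paper and used as a black box. Your sketch via the development $\widehat X$ and the covering-space endgame is exactly the standard argument (and essentially Brown's own), and it is correct in outline: the relations (1)--(3) are precisely those needed to make the gluings in $\widehat X$ well defined, $\phi$ is injective on each $G_v$ because the isotropy groups are built into the presentation as subgroups, hence $K=\ker\phi$ acts freely, and simple connectivity of $X$ forces $K=1$.

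One caveat worth flagging. Passing to the barycentric subdivision, as you propose in order to get pointwise stabilizers, changes $T$, $E^+$, $F$ and therefore the displayed presentation; your parenthetical ``after Tietze transformations, not the presentation'' is true but is itself a lemma, not a remark. As you correctly observe at the end, under condition \eqref{non-inverted} together with the (here tacit) hypothesis that no $2$-cell is rotated by its stabilizer, this subdivision step is unnecessary; in the applications of this paper the $2$-cells are standard $(h+2)$-gons whose stabilizers are contained in vertex stabilizers, so both hypotheses hold and the development can be built directly on $X$.
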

We call the relations of type~(2) the \emph{isotropy relations}, and those of type (3) the \emph{face relations}.

\subsection{Application to the cluster modular groups}\label{apply cluster}
In this section, we collect basic properties of the action of the saturated cluster modular group on the saturated cluster modular complex which are relevant to the application of the above method.
Let $\ve=(\ve_{ij})_{i,j \in I}$ be an exchange matrix. Let $V:=C_0\big(\wM_{|\ve|}\big)$ be the set of vertices of the saturated modular complex. First note that we have a natural surjective map $\operatorname{Mat}\colon V \to |\ve|$ which extracts the exchange matrix assigned to each vertex.

\begin{lem}\label{vertex class}
The map $\operatorname{Mat}$ induces a bijection $\operatorname{Mat}\colon V/\wG_{|\ve|} \to |\ve|$.
\end{lem}

\begin{proof}The surjective map $\operatorname{Mat}\colon V \to |\ve|$ descends to a map $\operatorname{Mat}\colon V/\wG_{|\ve|} \to |\ve|$, since the action of the saturated cluster modular group preserves the assigned exchange matrices. If two vertices~$v_0$ and~$v_1$ are assigned the same matrices, then there exists an element $\gamma \in \operatorname{Aut}(D)$ such that $\gamma(v_0)=v_1$. Hence $v_0$ is sent to $v_1$ by an element of $\wG_{|\ve|}$. Thus the map $\operatorname{Mat}\colon V/\wG_{|\ve|} \to |\ve|$ is bijective.
\end{proof}

\begin{lem}\label{without inversion} The action of the saturated cluster modular group on the saturated modular complex $\widehat\M_{|\ve|}$ satisfies the condition~\eqref{non-inverted}.
\end{lem}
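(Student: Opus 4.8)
The condition (\ref{non-inverted}) bundles two requirements on the $1$-cells of $\wM_{|\ve|}$, equivalently on the edges of the saturated exchange graph $\wE_{|\ve|}$: that each such edge have two distinct endpoints, and that no $g\in\wG_{|\ve|}$ fix an edge while interchanging its endpoints. The plan is to deduce the first from the covering map $\wE_{|\ve|}\to\mathbb{E}_{|\bi|}$ onto the ordinary exchange graph, and the second from the fact (as in the proof of \cref{vertex class}) that the $\wG_{|\ve|}$-action on $\wM_{|\ve|}$ preserves the exchange matrix attached to each vertex.

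For distinctness of endpoints: an edge of $\wE_{|\ve|}$ is labelled by a mutation $\mu_k$, and the covering map sends it onto the edge of $\mathbb{E}_{|\bi|}$ joining some seed $\bi'$ to its mutation $\mu_k(\bi')$. Since $X_k$ is one of the algebraically independent variables constituting a seed, the $\X$-cluster transformation \eqref{X-mutation} sends $X_k$ to $X_k^{-1}\neq X_k$, so $\bi'\neq\mu_k(\bi')$ and that edge of $\mathbb{E}_{|\bi|}$ is not a loop. A covering map of graphs carries each edge homeomorphically onto an edge and respects the incidence of endpoints, hence carries loops to loops; so no edge of $\wE_{|\ve|}$ is a loop.

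For the absence of inversions: suppose, for contradiction, that some $g\in\wG_{|\ve|}$ inverted a $1$-cell $\sigma$. By the previous paragraph $\sigma$ is not a loop, so $g$ must interchange the two endpoints $v_1,v_2$ of $\sigma$; writing $k$ for the label of $\sigma$ we have $v_2=\mu_k(v_1)$. Since the action preserves the exchange matrix at each vertex, $g(v_1)=v_2$ forces $\mu_k(\ve_{v_1})=\ve_{v_1}$, where $\ve_{v_1}\in|\ve|$ is the matrix at $v_1$. By the matrix mutation formula \eqref{e-mutation}, $(\mu_k(\ve_{v_1}))_{kj}=-(\ve_{v_1})_{kj}$ for every $j$, so this equality forces $(\ve_{v_1})_{kj}=0$ for all $j$; that is, $k$ would be an isolated non-frozen vertex of the quiver $\ve_{v_1}$. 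But the quivers in question --- those of type $X_6$ and $X_7$, and all quivers mutation-equivalent to them --- are connected, so this is impossible, and therefore no $1$-cell of $\wM_{|\ve|}$ is inverted.

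I expect the delicate point to be exactly this last reduction: an edge of $\wE_{|\ve|}$ can be inverted only if its two endpoints carry the \emph{same} quiver, which in turn can only happen through an isolated non-frozen vertex. For a completely arbitrary exchange matrix this genuinely does occur --- already the rank-one zero matrix produces a single inverted edge --- so one really does need the connectedness of the relevant quivers; granting that, everything else is formal.
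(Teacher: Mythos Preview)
Your proof is correct for the cases the paper actually uses (types $X_6$ and $X_7$), and it proceeds along a genuinely different line from the paper's own argument.

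For distinctness of endpoints, the paper argues purely inside the saturated setup: since none of the standard relations has length $1$, no edge of the underlying tree is collapsed to a loop when one passes to $\wE_{|\ve|}$. Your route via the covering map $\wE_{|\ve|}\to\mathbb{E}_{|\bi|}$ and the transformation $X_k\mapsto X_k^{-1}$ is also valid, though it borrows an auxiliary seed $\bi$ lying over $\ve$, which is slightly extrinsic to the saturated picture.

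For the absence of inversions, the paper's argument is group-theoretic: if $\phi$ inverted an edge labelled $i$, one writes $\phi=\sigma\mu_i$ for an involutive seed isomorphism $\sigma$, and then $\phi^2=\tau$ forces a relation of the shape $\tau=\mu_{\sigma^{-1}(i)}\mu_i$; the paper asserts this cannot be a consequence of the standard relations. Your argument instead exploits the elementary but decisive fact that the $\wG_{|\ve|}$-action preserves the exchange matrix at each vertex, so an inversion would force $\mu_k(\ve_{v_1})=\ve_{v_1}$ and hence make $k$ an isolated non-frozen vertex. You then finish by invoking the connectedness of every quiver in the mutation classes $X_6$ and $X_7$.

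Your closing caveat is exactly on point: the rank-$1$ zero matrix already produces an honest inverted edge, so the lemma taken at face value requires some hypothesis excluding isolated non-frozen vertices. Your argument makes that hypothesis explicit and buys transparency at the price of generality; the paper's argument aims for full generality but, as your example shows, must implicitly make the same exclusion (indeed, when $\sigma^{-1}(i)=i$ the relation $\tau=\mu_i\mu_i$ \emph{is} the involutivity relation). For the applications in the paper either argument suffices.
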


\begin{proof}
First note that there are no loops in the saturated cluster modular complex, since we have no standard relation of length $1$. Assume that an element $\phi \in \wG_{|\ve|}$, $\phi\neq 1$, fixes an edge $e$ of $\widehat\M_{|\ve|}$. Let $x$ and $y$ be endpoints and $i \in I$ the label of $e$. Then $\phi$ is of the form $\phi=\sigma\mu_i$ as an element of $\wG_{[x]}$, where $\sigma$ is an involutive quiver isomorphism. Now suppose we have $\phi(x)=y$ and $\phi(y)=x$. Then we have $\phi^2 \in \operatorname{Aut}([x])$, namely, $\phi^2$ must be given by a seed isomorphism~$\tau$ as an element of $\wG_{[x]}$. Hence we have a non-trivial relation $\tau=\mu_{\sigma^{-1}(i)}\mu_i$, which cannot be written as a composition of the standard relations. Thus we have a contradiction.
\end{proof}

If an edge $e$ of the saturated modular complex has endpoints $v_0$ and $v_1$, the above lemma provides an inclusion of isotropy groups $\big(\widehat\Gamma_{|\ve|}\big)_e \subseteq \big(\widehat\Gamma_{|\ve|}\big)_{v_0} \cap \big(\widehat\Gamma_{|\ve|}\big)_{v_1}$. An element $\sigma$ of $(\widehat\Gamma_{|\ve|})_e$ is called a \emph{simultaneous seed isomorphism} of the pair $(v_0,v_1)$. We also say that~$\sigma$ is a simultaneous seed isomorphism of the pair of underlying exchange matrices $([v_0],[v_1])$.

\begin{lem}\label{edge class}
The $\wG_{|\ve|}$-orbit of an oriented edge in $\wM_{|\ve|}$ is determined by a triple $(\ve_1,\ve_2;k)$ of two exchange matrices $\ve_1, \ve_2 \in |\ve|$ and an index $k \in I$ such that $\ve_2=\mu_k(\ve_1)$, modulo the equivalence relation generated by $(\ve_1,\ve_2;k) \sim (\sigma.\ve_1,\sigma.\ve_2;\sigma(k))$ for seed permutations $\sigma$.
\end{lem}

\begin{proof}Fix a tree $T$ of representative. Then each oriented edge is translated by an element of~$\wG_{|\ve|}$ so that its origin belongs to~$C_0(T)$. If we have two oriented edges with the same origin, they are translated each other by an element of~$\wG_{|\ve|}$ if and only if the corresponding triples are equivalent.
\end{proof}

\begin{lem}\label{face class}The $\wG_{|\ve|}$-orbit of a standard $(h+2)$-gon cycle $C$ in $\wM_{|\ve|}$ is determined by a triple $(\ve';k,l)$ of an exchage matrix $\ve' \in |\ve|$ and indices $k,l \in I-I_0$ such that $\ve'_{kl}=-p\ve'_{lk}=p$, modulo the equivalence relation generated by $(\ve';k,l) \sim (\sigma.\ve';\sigma(k),\sigma(l))$. Here $(p,h)=(0,2)$, $(1,3)$, $(2,4)$ or $(3,6)$.
\end{lem}

\begin{proof}
The base point of each standard cycle is translated by an element of $\wG_{|\ve|}$ to a point belonging to a fixed tree of representative. Then the assertion is clear.
\end{proof}

A standard $(h+2)$-gon cycle which path through a vertex $v \in V$ and determined by a pair of indices~$k$, $l$ is denoted by $C_{h+2}(k,l)_v$. A cycle may have several expressions.
\begin{dfn}[the saturated modular graph]
We define the \emph{saturated modular graph} $\widehat{G}_{|\ve|}$ to be the graph whose vertices are exchange matrices in $|\ve|$ and edges are $\wG_{|\ve|}$-orbits of edges in~$\wM_{|\ve|}$.
\end{dfn}

From Lemma~\ref{face class}, each $\wG_{|\ve|}$-orbit of a standard cycle is represented by a circuit in the saturated modular graph~$\widehat{G}_{|\ve|}$. The following lemma is clear from the definition.

\begin{lem}\label{lem:finitegraph}
If $\ve$ is of finite mutation type, then the saturated modular graph is a finite graph.
\end{lem}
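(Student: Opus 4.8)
The plan is to verify finiteness of the vertex set and of the edge set of the saturated modular graph $\widehat{G}_{|\ve|}$ separately. By definition the set of vertices of $\widehat{G}_{|\ve|}$ is the mutation class $|\ve|$, which is finite exactly because $\ve$ is assumed to be of finite mutation type; this disposes of the vertices with no further work. For the edges, I would use \cref{edge class}: it asserts that every $\wG_{|\ve|}$-orbit of an oriented edge of $\wM_{|\ve|}$ is pinned down by the data of a matrix $\ve' \in |\ve|$, an index $k \in I$, and the matrix $\mu_k(\ve')$, taken modulo simultaneous seed isomorphism of the pair $(\ve', \mu_k(\ve'))$. Since $|\ve|$ is finite and the index set $I$ is finite, there are only finitely many pairs $(\ve', k)$, hence at most finitely many $\wG_{|\ve|}$-orbits of oriented edges, and therefore at most that many $\wG_{|\ve|}$-orbits of unoriented edges. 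As the edges of $\widehat{G}_{|\ve|}$ are by definition precisely these orbits, the edge set is finite. Combining the two bounds shows that $\widehat{G}_{|\ve|}$ is a finite graph.

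I do not expect any genuine obstacle here: the entire content sits in \cref{edge class} together with the elementary facts that a seed of finite mutation type has finitely many exchange matrices in its mutation class and only $|I|$ possible mutation directions from each of them. The one point deserving an explicit remark is the automatic passage from orbits of oriented edges to orbits of unoriented edges, which can only decrease cardinality, so finiteness is inherited; everything else is bookkeeping. (One could equally phrase the edge count by fixing a tree of representatives $T$ as in the proof of \cref{edge class} and noting that each edge is $\wG_{|\ve|}$-translated to one with origin in the finite vertex set $V(T)$, of which each vertex has only $|I|$ incident labels, but this adds nothing essential.)
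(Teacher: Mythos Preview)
Your argument is correct and matches the paper's intent: the paper does not give a formal proof at all, merely remarking that the lemma is ``clear from the definition.'' Your write-up is precisely the unpacking of that remark, invoking \cref{edge class} (which precedes \cref{lem:finitegraph} in the text) to bound the edge orbits by pairs $(\ve',k)$ with $\ve'\in|\ve|$ and $k\in I$; there is nothing to add or correct.
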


\section[Presentations of $\widehat\Gamma_{X_7}$ and $\widehat\Gamma_{X_6}$]{Presentations of $\boldsymbol{\widehat\Gamma_{X_7}}$ and $\boldsymbol{\widehat\Gamma_{X_6}}$}\label{section: proof}

In this section, based on the method established in Section~\ref{section: Brown}, we give finite presentations of the saturated cluster modular groups of type~$X_7$ and~$X_6$.

\subsection[The saturated cluster modular group of type $X_7$]{The saturated cluster modular group of type $\boldsymbol{X_7}$}

\begin{figure}[t]\centering
\scalebox{0.9}{
\begin{tikzpicture}
\begin{scope}[>=latex]
\fill (0,0) circle(2pt) coordinate(A) node[above]{$0$};
\fill (A) ++(165: 2) circle(2pt) coordinate(B) node[left]{$1$};
\fill (A) ++(135: 2) circle(2pt) coordinate(C) node[left]{$2$};
\fill (A) ++(45: 2) circle(2pt) coordinate(D) node[right]{$3$};
\fill (A) ++(15: 2) circle(2pt) coordinate(E) node[right]{$4$};
\fill (A) ++(285: 2) circle(2pt) coordinate(F) node[below]{$5$};
\fill (A) ++(255: 2) circle(2pt) coordinate(G) node[below]{$6$};

\draw[->,shorten >=2pt,shorten <=2pt] (A) -- (B) [thick];
\draw[->,double,shorten >=2pt,shorten <=2pt] (B) -- (C) [thick];
\draw[->,shorten >=2pt,shorten <=2pt] (C) -- (A) [thick];
\draw[->,shorten >=2pt,shorten <=2pt] (A) -- (D) [thick];
\draw[->,double,shorten >=2pt,shorten <=2pt] (D) -- (E) [thick];
\draw[->,shorten >=2pt,shorten <=2pt] (E) -- (A) [thick];
\draw[->,shorten >=2pt,shorten <=2pt] (A) -- (F) [thick];
\draw[->,double,shorten >=2pt,shorten <=2pt] (F) -- (G) [thick];
\draw[->,shorten >=2pt,shorten <=2pt] (G) -- (A) [thick];

\draw (A) ++(315: 3) node{$Q_0$};

\fill (6,0) circle(2pt) coordinate(A1) node[above]{$0$};
\fill (A1) ++(165: 2) circle(2pt) coordinate(B1) node[left]{$1$};
\fill (A1) ++(135: 2) circle(2pt) coordinate(C1) node[left]{$2$};
\fill (A1) ++(45: 2) circle(2pt) coordinate(D1) node[right]{$3$};
\fill (A1) ++(15: 2) circle(2pt) coordinate(E1) node[right]{$4$};
\fill (A1) ++(285: 2) circle(2pt) coordinate(F1) node[below]{$5$};
\fill (A1) ++(255: 2) circle(2pt) coordinate(G1) node[below]{$6$};

\draw[->,shorten >=2pt,shorten <=2pt] (B1) -- (A1) [thick];
\draw[->,shorten >=2pt,shorten <=2pt] (A1) -- (C1) [thick];
\draw[->,shorten >=2pt,shorten <=2pt] (D1) -- (A1) [thick];
\draw[->,shorten >=2pt,shorten <=2pt] (A1) -- (E1) [thick];
\draw[->,shorten >=2pt,shorten <=2pt] (F1) -- (A1) [thick];
\draw[->,shorten >=2pt,shorten <=2pt] (A1) -- (G1) [thick];

\draw[->,shorten >=2pt,shorten <=2pt] (B1) -- (C1) [thick];
\draw[->,shorten >=2pt,shorten <=2pt] (C1) -- (D1) [thick];
\draw[->,shorten >=2pt,shorten <=2pt] (D1) -- (E1) [thick];
\draw[->,shorten >=2pt,shorten <=2pt] (E1) -- (F1) [thick];
\draw[->,shorten >=2pt,shorten <=2pt] (F1) -- (G1) [thick];
\draw[->,shorten >=2pt,shorten <=2pt] (G1) -- (B1) [thick];

\draw[->,shorten >=2pt,shorten <=2pt] (E1) -- (B1) [thick];
\draw[->,shorten >=2pt,shorten <=2pt] (C1) -- (F1) [thick];
\draw[->,shorten >=2pt,shorten <=2pt] (G1) -- (D1) [thick];

\draw (A1) ++(315: 3) node{$Q_1$};

\end{scope}
\end{tikzpicture}}
\caption{Two quivers in $|X_7|$.}\label{fig: X_7}
\end{figure}

The mutation class $X_7$ consists of two quivers~$Q_0$ and~$Q_1$. They are shown in Fig.~\ref{fig: X_7}. Let $v_0 \in V$ be a vertex of the saturated modular complex such that $\operatorname{Mat}(v_0)=Q_0$. Let us fix a data of representatives as follows:
\begin{enumerate}\itemsep=0pt
\item The tree $T$ consists of two vertices $v_0$ and $v_1:=\mu_0(v_0)$ together with an edge $e_0:=\big(v_0\xrightarrow{\mu_0}v_1\big)$.
\item The set $E^+$ of representative of oriented edges consists of three oriented edges $e_0$, $e_1:=\big(v_0\xrightarrow{\mu_1}\mu_1(v_0)\big)$ and $e_2:=\big(v_1\xrightarrow{\mu_1}\mu_1(v_1)\big)$. There is an orientation of $\wM_{X_7}$ which extends them.
\item The set $F:=\{\tau_1,\tau_2,\tau_3,\tau_4\}$ of representative of 2-cells consists of two square cycles $\tau_1:=C_4(1,4)_{v_0}$, $\tau_2:=C_4(1,3)_{v_1}$ and two pentagon cycles $\tau_3:=C_5(0,1)_{v_0}$, $\tau_4:=C_5(1,4)_{v_1}$.
\end{enumerate}

\begin{figure}[t]\centering
\begin{tikzpicture}
\fill (0,0) circle(2pt) coordinate(A) node[above]{$Q_0$};
\fill (2,0) circle(2pt) coordinate(B) node[above]{$Q_1$};
\draw[thick] (A)--(B) node[midway,above]{$[e_0]$};
\draw[dashed] (A) to[out=145, in=90] node[above]{$[e_1]$} (-1.5,0) to[out=270, in=225] (A);
\draw[dashed] (B) to[out=45, in=90] node[above]{$[e_2]$} (3.5,0) to[out=270, in=315] (B);
\end{tikzpicture}
\caption{The modular graph $\widehat{G}_{X_7}$. The tree $T$ is shown by a thick line. The set $E^+$ is shown by dashed lines.}\label{fig: graphX_7}
\end{figure}

The tree $T$ and the set $E^+$ are shown in Fig.~\ref{fig: graphX_7}. Then the following proposition can be easily verified using Lemmas~\ref{vertex class}--\ref{face class}.
\begin{prop}
The data $(T,P,E^+,F)$ determines a data of representatives for the action of~$\wG_{X_7}$ on~$\wM_{X_7}$.
\end{prop}
Then we choose the Brown generators as follows:
\begin{enumerate}\itemsep=0pt
\item
The isotropy group of the vertex $v_0$ is the image of the group homomorphism
\[
\iota\colon \ \mathfrak{S}(\{1,3,5\}) \to \mathfrak{S}(\{0,1,2,3,4,5,6\})
\]
given by $\iota(\sigma)\colon 0 \mapsto 0$, $k \mapsto \sigma(k)$, $k+1 \mapsto \sigma(k)+1$ for $k \in \{1,3,5\}$. Here $\mathfrak{S}(X)$ denotes the symmetric group of a finite set $X$. We write a cyclic permutation $k_1 \mapsto \dots \mapsto k_n \mapsto k_1$ of $\{1,3,5\}$ as $\sigma_{k_1\dots k_n}$. It turns out that the isotropy group of $v_1$ is the same, hence $\mathcal{S}_3:=\iota(\mathfrak{S}(\{1,3,5\}))$ consists of simultaneous seed isomorphisms.
\item
Set $g_{e_0}:=1$, $g_{e_1}:=(1\ 2)\mu_1 \in \wG_{v_0}$ and $g_{e_2}:=(0\ 1\ 2)(3\ 4\ 5\ 6)\mu_1 \in \wG_{v_1}$. Then they satisfy the condition~(\ref{Brown generator}). Changing their basepoints to the vertex $v_0$ using the edge $e_0$ (see Remark~\ref{rem:satexgraph}), we get $\phi_1:=(1\ 2)\mu_1$ and $\psi_1:=(0\ 1\ 2)(3\ 4\ 5\ 6)\mu_2\mu_1\mu_0$, which we regard as elements in~$\wG_{Q_0}$.
\end{enumerate}

From Theorem~\ref{thm Brown}, the elements in $\mathcal{S}_3$ together with $\phi_1$ and $\psi_1$ generate the saturated cluster modular group. Let us investigate the relations among them. To simplify the computations, let us introduce auxiliary elements $\phi_3:=\sigma_{135}\phi_1\sigma_{135}^{-1}$ and $\phi_5:=\sigma_{153}\phi_1\sigma_{153}^{-1}$. Similarly define~$\psi_3$ and~$\psi_5$. Then the relations are determined as follows.

{\bf Isotropy relations.}
The isotropy group of the edge $e_1$ is generated by $\sigma_{35}$. Then the isotropy relation implies $\phi_1^{-1}\sigma_{35}\phi_1=\sigma_{35}$. Similarly, the isotropy group of the edge $e_2$ is generated by~$\sigma_{35}$. Then the isotropy relation implies $\psi_1^{-1}\sigma_{35}\psi_1=\sigma_{35}$.

{\bf Face relations.} From the cycle $\tau_1$ we get
\[
\phi_3\phi_1^{-1}\phi_3^{-1}\phi_1=(3\ 4)\mu_3(1\ 2)\mu_2(3\ 4)\mu_4(1\ 2)\mu_1=\mu_4\mu_1\mu_4\mu_1=1.
\]
From the cycle $\tau_3$ we get
\begin{align*}
\sigma_{35}\phi_1^{-1}\psi_1^2&=(3\ 5)(4\ 6)(1\ 2)\mu_2(0\ 1\ 2)(3\ 4\ 5\ 6)\mu_2\mu_1\mu_0(0\ 1\ 2)(3\ 4\ 5\ 6)\mu_2\mu_1\mu_0 \\
&=(0\ 1)\mu_0\mu_1\mu_0\mu_1\mu_0=1.
\end{align*}

In order to investigate $\tau_2$ and $\tau_4$, we represent each element in $\wG_{v_1}$: $\psi_1=(0\ 1\ 2)(3\ 4\ 5\ 6)\mu_1$, $\psi_3=(0\ 3\ 4)(5\ 6\ 1\ 2)\mu_3$ and $\psi_5=(0\ 5\ 6)(1\ 2\ 3\ 4)\mu_5$.
Then from the cycle $\tau_2$ we get
\begin{align*}
\psi_3^{-1}\psi_1\psi_3^{-1}\psi_1&
 =(0\ 4\ 3)(6\ 5\ 2\ 1)\mu_4(0\ 1\ 2)(3\ 4\ 5\ 6)\mu_1(0\ 4\ 3)(6\ 5\ 2\ 1)\mu_4(0\ 1\ 2)(3\ 4\ 5\ 6)\mu_1\\
&=\mu_3\mu_1\mu_3\mu_1=1.
\end{align*}

From the cycle $\tau_4$ we get
\begin{align*}
\sigma_{153}\psi_5\psi_1\psi_3\psi_5\psi_1
&=(1\ 5\ 3)(2\ 6\ 4)(0\ 5\ 6)(1\ 2\ 3\ 4)\mu_5(0\ 1\ 2)(3\ 4\ 5\ 6)\mu_1 \\
&\qquad\ (0\ 3\ 4)(5\ 6\ 1\ 2)\mu_3(0\ 5\ 6)(1\ 2\ 3\ 4)\mu_5(0\ 1\ 2)(3\ 4\ 5\ 6)\mu_1\\
&=(1\ 4)\mu_1\mu_4\mu_1\mu_4\mu_1=1.
\end{align*}
Summarizing these relations, we obtain Theorem~\ref{introthm: X_7}.

\subsection[The saturated cluster modular group of type $X_6$]{The saturated cluster modular group of type $\boldsymbol{X_6}$}

\begin{figure}[t]\centering
\scalebox{0.9}{
\begin{tikzpicture}
\begin{scope}[>=latex]
\fill (-3,0) circle(2pt) coordinate(O) node[above]{$1$};
\fill (O) ++(165: 2) circle(2pt) coordinate(A) node[left]{$2$};
\fill (O) ++(135: 2) circle(2pt) coordinate(B) node[left]{$3$};
\fill (O) ++(45: 2) circle(2pt) coordinate(C) node[right]{$4$};
\fill (O) ++(15: 2) circle(2pt) coordinate(D) node[right]{$5$};
\fill (O) ++(270: 2) circle(2pt) coordinate(E) node[below]{$6$};

\draw[->,shorten >=2pt,shorten <=2pt] (O) -- (A) [thick];
\draw[->,double,shorten >=2pt,shorten <=2pt] (A) -- (B) [thick];
\draw[->,shorten >=2pt,shorten <=2pt] (B) -- (O) [thick];
\draw[->,shorten >=2pt,shorten <=2pt] (O) -- (C) [thick];
\draw[->,double,shorten >=2pt,shorten <=2pt] (C) -- (D) [thick];
\draw[->,shorten >=2pt,shorten <=2pt] (D) -- (O) [thick];
\draw[->,shorten >=2pt,shorten <=2pt] (O) -- (E) [thick];

\draw (O) ++(315: 3) node{$Q_0$};

\fill (3,0) circle(2pt) coordinate(O1) node[above]{$1$};
\fill (O1) ++(165: 2) circle(2pt) coordinate(A1) node[left]{$2$};
\fill (O1) ++(135: 2) circle(2pt) coordinate(B1) node[left]{$3$};
\fill (O1) ++(45: 2) circle(2pt) coordinate(C1) node[right]{$4$};
\fill (O1) ++(15: 2) circle(2pt) coordinate(D1) node[right]{$5$};
\fill (O1) ++(270: 2) circle(2pt) coordinate(E1) node[below]{$6$};

\draw[->,shorten >=2pt,shorten <=2pt] (O1) -- (A1) [thick];
\draw[->,shorten >=2pt,shorten <=2pt] (B1) -- (O1) [thick];
\draw[->,shorten >=2pt,shorten <=2pt] (O1) -- (D1) [thick];
\draw[->,shorten >=2pt,shorten <=2pt] (C1) -- (O1) [thick];
\draw[->,shorten >=2pt,shorten <=2pt] (E1) -- (O1) [thick];

\draw[->,shorten >=2pt,shorten <=2pt] (A1) -- (C1) [thick];
\draw[->,shorten >=2pt,shorten <=2pt] (D1) -- (B1) [thick];
\draw[->,shorten >=2pt,shorten <=2pt] (C1) -- (D1) [thick];
\draw[->,shorten >=2pt,shorten <=2pt] (B1) -- (A1) [thick];
\draw[->,shorten >=2pt,shorten <=2pt] (A1) -- (E1) [thick];
\draw[->,shorten >=2pt,shorten <=2pt] (D1) -- (E1) [thick];

\draw (O1) ++(315: 3) node{$Q_1$};


\draw (0,-5) coordinate(O2);
\fill (O2) ++(0: 2) circle(2pt) coordinate(A2) node[right]{$1$};
\fill (O2) ++(60: 2) circle(2pt) coordinate(B2) node[above]{$6$};
\fill (O2) ++(120: 2) circle(2pt) coordinate(C2) node[above]{$2$};
\fill (O2) ++(180: 2) circle(2pt) coordinate(D2) node[left]{$4$};
\fill (O2) ++(240: 2) circle(2pt) coordinate(E2) node[below]{$3$};
\fill (O2) ++(300: 2) circle(2pt) coordinate(F2) node[below]{$5$};

\draw[->,shorten >=2pt,shorten <=2pt] (A2) -- (B2) [thick];
\draw[->,shorten >=2pt,shorten <=2pt] (B2) -- (C2) [thick];
\draw[->,shorten >=2pt,shorten <=2pt] (C2) -- (D2) [thick];
\draw[->,shorten >=2pt,shorten <=2pt] (D2) -- (E2) [thick];
\draw[->,shorten >=2pt,shorten <=2pt] (E2) -- (F2) [thick];
\draw[->,shorten >=2pt,shorten <=2pt] (F2) -- (A2) [thick];

\draw[->,shorten >=2pt,shorten <=2pt] (B2) -- (E2) [thick];
\draw[->,shorten >=2pt,shorten <=2pt] (D2) -- (A2) [thick];
\draw[->,shorten >=2pt,shorten <=2pt] (F2) -- (C2) [thick];

\draw (O2) ++(315: 3) node{$Q_2$};

\fill (-3,-10) circle(2pt) coordinate(O3) node[above]{$1$};
\fill (O3) ++(165: 2) circle(2pt) coordinate(A3) node[left]{$2$};
\fill (O3) ++(135: 2) circle(2pt) coordinate(B3) node[left]{$3$};
\fill (O3) ++(45: 2) circle(2pt) coordinate(C3) node[right]{$4$};
\fill (O3) ++(15: 2) circle(2pt) coordinate(D3) node[right]{$5$};
\fill (O3) ++(270: 2) circle(2pt) coordinate(E3) node[below]{$6$};

\draw[->,shorten >=2pt,shorten <=2pt] (O3) -- (B3) [thick];
\draw[->,shorten >=2pt,shorten <=2pt] (A3) -- (O3) [thick];
\draw[->,shorten >=2pt,shorten <=2pt] (O3) -- (C3) [thick];
\draw[->,shorten >=2pt,shorten <=2pt] (D3) -- (O3) [thick];
\draw[->,shorten >=2pt,shorten <=2pt] (O3) -- (E3) [thick];

\draw[->,shorten >=2pt,shorten <=2pt] (A3) -- (B3) [thick];
\draw[->,shorten >=2pt,shorten <=2pt] (D3) -- (C3) [thick];
\draw[->,shorten >=2pt,shorten <=2pt] (C3) -- (A3) [thick];
\draw[->,shorten >=2pt,shorten <=2pt] (B3) -- (D3) [thick];
\draw[->,shorten >=2pt,shorten <=2pt] (E3) -- (A3) [thick];
\draw[->,shorten >=2pt,shorten <=2pt] (E3) -- (D3) [thick];

\draw (O3) ++(315: 3) node{$Q_3$};

\fill (3,-10) circle(2pt) coordinate(O4) node[above]{$1$};
\fill (O4) ++(165: 2) circle(2pt) coordinate(A4) node[left]{$2$};
\fill (O4) ++(135: 2) circle(2pt) coordinate(B4) node[left]{$3$};
\fill (O4) ++(45: 2) circle(2pt) coordinate(C4) node[right]{$4$};
\fill (O4) ++(15: 2) circle(2pt) coordinate(D4) node[right]{$5$};
\fill (O4) ++(270: 2) circle(2pt) coordinate(E4) node[below]{$6$};

\draw[->,shorten >=2pt,shorten <=2pt] (O4) -- (A4) [thick];
\draw[->,double,shorten >=2pt,shorten <=2pt] (A4) -- (B4) [thick];
\draw[->,shorten >=2pt,shorten <=2pt] (B4) -- (O4) [thick];
\draw[->,shorten >=2pt,shorten <=2pt] (O4) -- (C4) [thick];
\draw[->,double,shorten >=2pt,shorten <=2pt] (C4) -- (D4) [thick];
\draw[->,shorten >=2pt,shorten <=2pt] (D4) -- (O4) [thick];
\draw[->,shorten >=2pt,shorten <=2pt] (E4) -- (O4) [thick];

\draw (O4) ++(315: 3) node{$Q_4$};

\end{scope}
\end{tikzpicture}}
\caption{Five quivers in $|X_6|$.}\label{fig: X_6}
\end{figure}

Next we investigate the mutation class $X_6$. Although the computation is a little more complicated in this case, the strategy is the same as before. The mutation class $X_6$ consists of five quivers $Q_0$,\dots, $Q_4$. They are shown in Fig.~\ref{fig: X_6}. Let $v_0 \in V$ be a vertex of the saturated modular complex such that $\operatorname{Mat}(v_0)=Q_0$. Let us fix a data of representatives as follows, see Fig.~\ref{fig: graphX_6}.
\begin{enumerate}\itemsep=0pt
\item The tree $T$ consists of five vertices $v_0$, $v_1:=(2\ 3)\mu_1(v_0)$, $v_2:=(3\ 5)\mu_6(v_1)$, $v_3:=(2\ 4)(1\ 6)\mu_6(v_2)$ and $v_4:=(4\ 5)\mu_1(v_3)$ together with four edges
\begin{alignat*}{3}
& e^0:=\big(v_0\xrightarrow{\mu_1}v_1\big), \qquad&&
e^1:=\big(v_1\xrightarrow{\mu_6}v_2\big), &\\
& e^2:=\big(v_2\xrightarrow{\mu_6}v_3\big),\qquad&&
e^3:=\big(v_3\xrightarrow{\mu_1}v_4\big).&
\end{alignat*}
Here the permutations are inserted in order to make the labelings of quiver vertices at $v_0,\dots,v_4$ consistent with those shown in Fig.~\ref{fig: X_6}.
\item The set $E^+:=\big\{e^0,\dots, e^3, e_0,\dots, e_6\big\}$ of representatives of oriented edges consists of eleven oriented edges, where
 \begin{alignat*}{4}
&e_0:=\big(v_0\xrightarrow{\mu_6}(v_4)\big), \qquad &&
e_1 :=\big(v_0\xrightarrow{\mu_2}\mu_2(v_0)\big),\qquad &&
e_2:=\big(v_4\xrightarrow{\mu_2}\mu_2(v_4)\big), &\\
& e_3:=\big(v_1\xrightarrow{\mu_2}\mu_2(v_1)\big), \qquad &&
e_4 :=\big(v_1\xrightarrow{\mu_3}\mu_3(v_1)\big), \qquad &&
e_5:=\big(v_2\xrightarrow{\mu_4}\mu_4(v_2)\big),& \\
& e_6:=\big(v_2\xrightarrow{\mu_3}\mu_3(v_2)\big).&&&& &
\end{alignat*}
There is an orientation of $\wM_{X_6}$ which extends them.
\item The set $F:=\{\tau_1,\dots,\tau_{11}\}$ of representatives of 2-cells consists of six square cycles \begin{alignat*}{4}
& \tau_1:=C_4(2,6)_{v_0}, \qquad &&
\tau_2 :=C_4(2,5)_{v_0}, \qquad &&
\tau_3:=C_4(2,5)_{v_4},&\\
& \tau_4:=C_4(3,4)_{v_1}, \qquad &&
\tau_5 :=C_4(2,5)_{v_0},\qquad &&
\tau_6:=C_4(4,6)_{v_1}&
\end{alignat*}
and five pentagon cycles \begin{alignat*}{4}
&\tau_7:=C_5(1,6)_{v_0}, \qquad &&
\tau_8:=C_5(1,2)_{v_0}, \qquad &&
\tau_9:=C_5(1,5)_{v_4}, &\\
& \tau_{10}:=C_5(2,4)_{v_1}, \qquad &&
\tau_{11} :=C_5(2,4)_{v_3}. &&&
\end{alignat*}
\end{enumerate}

\begin{figure}[t]\centering
\begin{tikzpicture}
\draw (0,0) coordinate(O);
\fill (O)++(126: 2) circle(2pt) coordinate(A) node[left]{$Q_0$};
\draw (A)++(0,1.5) coordinate(A');
\fill (O)++(198: 2) circle(2pt) coordinate(B) node[left]{$Q_1$};
\fill (O)++(270: 2) circle(2pt) coordinate(C) node[below]{$Q_2$};
\fill (O)++(342: 2) circle(2pt) coordinate(D) node[right]{$Q_3$};
\fill (O)++(54: 2) circle(2pt) coordinate(E) node[right]{$Q_4$};
\draw (E)++(0,1.5) coordinate(E');
\draw[thick] (A)--(B) node[midway,left]{$e^0$};
\draw[thick] (B)--(C) node[midway,below]{$e^1$};
\draw[thick] (C)--(D) node[midway,below]{$e^2$};
\draw[thick] (D)--(E) node[midway,right]{$e^3$};
\draw[dashed] (A) -- (E) node[midway,below]{$e_0$};
\draw[dashed] (A) to[out=135, in=180] node[left]{$e_1$} (A') to[out=0, in=45] (A);
\draw[dashed] (E) to[out=135, in=180] node[above]{$e_2$} (E') to[out=0, in=45] (E);
\draw[dashed] (B) to[out=325, in=215] node[above]{$e_3$} (D);
\draw[dashed] (B) to[out=35, in=145] node[above]{$e_4$} (D);
\draw[dashed] (C) to[out=250, in=270] node[midway,below]{$e_5$} (B);
\draw[dashed] (C) to[out=290, in=270] node[midway,below]{$e_6$} (D);
\end{tikzpicture}
\caption{The modular graph $\widehat{G}_{X_6}$. The tree $T$ is shown by thick lines. The set $E^+$ is shown by dashed lines.}\label{fig: graphX_6}
\end{figure}

Then the following proposition can be easily verified using Lemmas~\ref{vertex class}--\ref{face class}.
\begin{prop}
The data $(T,E^+,F)$ determines a data of representatives for the action of~$\wG_{X_6}$ on~$\wM_{X_6}$.
\end{prop}

Then we get the Brown generators as follows:
\begin{enumerate}\itemsep=0pt
\item The isotropy group of the vertices $v_0$, $v_1$, $v_3$ and $v_4$ are the same, and generated by an involution $\sigma$. It is written as $\sigma=(2\ 4)(3\ 5)$ in $\wG_{Q_0}$ and $\wG_{Q_4}$, $\sigma=(2\ 5)(3\ 4)$ in $\wG_{Q_1}$ and $\wG_{Q_3}$. The isotropy group of the vertex $v_2$ is generated by a permutation $\rho:=(1\ 2\ 5)(3\ 6\ 4) \in \wG_{Q_2}$ of order 3.
\item Set $g_{e^i}:=1$ for $i=0,\dots,3$, and
\begin{gather*}
g_{e_0}:=(1\ 6)\mu_1\mu_6\mu_1\mu_6\mu_1 \in \wG_{v_0},\\
g_{e_1}:=(2\ 3)\mu_2 \in \wG_{v_0}, \\
g_{e_2}:=(2\ 3)\mu_2 \in \wG_{v_4}, \\
g_{e_3}:=(4\ 6)(1\ 3)\mu_2(3\ 5)\mu_6(2\ 4)(1\ 6)\mu_6 \in \wG_{v_3}, \\
g_{e_4}:=(4\ 6)(1\ 2)\mu_3(3\ 5)\mu_6(2\ 4)(1\ 6)\mu_6 \in \wG_{v_3},\\
g_{e_5}:=(3\ 5)(2\ 1\ 5\ 4\ 6\ 3)\mu_4 \in \wG_{v_2}, \\
g_{e_6}:=(2\ 4)(1\ 6)\mu_6(1\ 3\ 6\ 5)(2\ 4)\mu_3 \in \wG_{v_2}.
\end{gather*}
Then they satisfy the condition~(\ref{Brown generator}). Changing their basepoints to the vertex $v_0$ \emph{along} the tree $T$, we get the corresponding elements in~$\wG_{Q_0}$:
\begin{alignat*}{3}
&\phi_0:=(1\ 6)\mu_1\mu_6\mu_1\mu_6\mu_1,\qquad &&
\phi_1:=(2\ 3)\mu_2,&\\
& \phi_2:=(2\ 3)\mu_6\mu_2\mu_6,\qquad &&
\phi_3:=(1\ 5\ 2\ 6\ 3\ 4)\mu_4\mu_2\mu_4\mu_3\mu_1,&\\
&\phi_4:=(1\ 4)(2\ 5)(3\ 6)\mu_4\mu_3\mu_4\mu_2\mu_1,\qquad &&
\phi_5:=(1\ 5\ 3)(2\ 4\ 6)\mu_3\mu_4\mu_6\mu_1,&\\
&\phi_6:=(1\ 5)(2\ 6)\mu_5\mu_2\mu_6\mu_1.&&&
\end{alignat*}
\end{enumerate}
For example, $\phi_3=((2\ 4)(1\ 6)\mu_6(3\ 5)\mu_6(2\ 3)\mu_1)^{-1}g_{e_3}((2\ 4)(1\ 6)\mu_6(3\ 5)\mu_6(2\ 3)\mu_1)$. From Theo\-rem~\ref{thm Brown}, these elements generate the saturated cluster modular group of type~$X_6$. Let us investigate the relations among them.

{\bf Isotropy relations.} We have the relations $\sigma^2=1$, $\rho^3=1$ and $\rho=\sigma \phi_6$.

{\bf Face relations.}
The verification of the following face relations is tedious but straightforward.
\begin{itemize}\itemsep=0pt
\item $\tau_1$: $\phi_2=\phi_1$. This relation implies that $\tau_2$ and $\tau_3$ induces the same relation.
\item $\tau_2(=\tau_3)$: $[\sigma,\phi_3]^2=1$.
\item $\tau_4$: $\sigma^{-1}\phi_4^{-1}\phi_6\phi_4=1$.
\item $\tau_5$: $(\phi_1\sigma)^2\big(\phi_1^{-1}\sigma\big)^2=1$.
\item $\tau_6$: $\big(\sigma\phi_4^{-1}\sigma^{-1}\big)\phi_4\phi_5=1$.
\item $\tau_7$: $\phi_0=1$.
\item $\tau_8$: $\phi_1^{-1}\phi_3^{-1}\phi_4=1$.
\item $\tau_9$: $\big(\sigma\phi_1\sigma^{-1}\big)\phi_3\phi_4^{-1}=1$.
\item $\tau_{10}$: $\phi_4^{-1}\big(\sigma\phi_3\sigma^{-1}\big)\phi_5\phi_3=\sigma$.
\item $\tau_{11}$: $\big(\sigma\phi_4\sigma^{-1}\big)\phi_3^{-1}\big(\sigma\phi_6\sigma^{-1}\big)\phi_3^{-1}=1$.
\end{itemize}
For example, the relation induced by $\tau_4$ is verified as follows. First we change the basepoint to $v_1$ using the path $(2\ 3)\mu_1$ from $v_0$ to $v_1$. Then we have $\phi_4=(1\ 4)(2\ 6)(3\ 5)\mu_2\mu_4\mu_3$ and $\phi_6=(1\ 5)(3\ 6)\mu_3\mu_6$, represented as elements of $\wG_{v_1}$. Then we compute
\begin{align*}
\phi_4^{-1}\phi_6\phi_4&=(1\ 4)(2\ 6)(3\ 5)\mu_5\mu_1\mu_6(1\ 5)(3\ 6)\mu_3\mu_6(1\ 4)(2\ 6)(3\ 5)\mu_2\mu_4\mu_3 \\
&=(2\ 5)(3\ 4)\mu_4\mu_3\mu_4\mu_3=\sigma.
\end{align*}

Now we prove Theorem~\ref{introthm: X_6}.

\begin{proof}[Proof of Theorem~\ref{introthm: X_6}]
From the face relations $\tau_1$, $\tau_4$, $\tau_6$, $\tau_7$ and $\tau_8$, we can reduce the number of generators as $\phi_2=\phi_1$, $\phi_6=\phi_4\sigma\phi_4^{-1}$, $\phi_5=\phi_4^{-1}\sigma\phi_4\sigma=[\phi_4^{-1},\sigma]$, $\phi_0=1$ and $\phi_4=\phi_3\phi_1$. Then the remaining relations are summarized as
\begin{alignat}{3}
&\sigma^2=1, &&&\label{eq:primitive1}\\
&[\sigma,\phi_3\phi_1]^3=1 && (\text{from $\rho^3=1$}),& \label{eq:primitive2}\\
&[\sigma,\phi_3]^2=1 && (\text{from $\tau_2$}),&\label{eq:primitive3} \\
&[\sigma,\phi_1]=\big[\phi_1^{-1},\sigma\big] && (\text{from $\tau_5$}),&\label{eq:primitive4} \\
&(\sigma\phi_1\sigma^{-1})\phi_3(\phi_3\phi_1)^{-1}=1 && (\text{from $\tau_9$}),& \label{eq:primitive5}\\
&(\phi_3\phi_1)^{-1}\big(\sigma\phi_3\sigma^{-1}\big)\big[(\phi_3\phi_1)^{-1},\sigma\big]\phi_3=\sigma \qquad && (\text{from $\tau_{10}$}),& \label{eq:primitive6}\\
&\big(\sigma\phi_3\phi_1\sigma^{-1}\big)\phi_3^{-1}\sigma[\phi_3\phi_1,\sigma]\phi_3^{-1}=1 \qquad && (\text{from $\tau_{11}$}).&\label{eq:primitive7}
\end{alignat}
Using the relations \eqref{eq:primitive1} and \eqref{eq:primitive5} in the form $\phi_3\phi_1=\sigma\phi_1\sigma^{-1}\phi_3$, the last relation is slightly simplified as
\begin{align}
\phi_3=\phi_1[\sigma,\phi_3]\sigma[\phi_3\phi_1,\sigma].\label{eq:primitive7'} \tag{4.7$'$}
\end{align}
Set $\alpha_1:=\phi_1$, $\beta_1:=\phi_3$. Let us introduce the auxiliary elements $\alpha_2:=\sigma \alpha_1 \sigma^{-1}$ and $\beta_2:=\sigma \beta_1 \sigma^{-1}$ to simplify the presentation. Then the relations \eqref{eq:primitive1}--\eqref{eq:primitive5} can be rewritten as follows:
\begin{gather}
 \sigma^2=1, \label{eq:simplified1} \\
 \big(\alpha_2\beta_2\beta_1^{-1}\alpha_1^{-1}\big)^3 = 1,\label{eq:simplified2} \\
 \big(\beta_2\beta_1^{-1}\big)^2 = 1, \label{eq:simplified3}\\
 \alpha_1\alpha_2 = \alpha_2\alpha_1, \label{eq:simplified4}\\
 \beta_1^{-1}\alpha_2\beta_1 = \alpha_1.\label{eq:simplified5}
\end{gather}
The relation \eqref{eq:simplified5} means that $\beta_1$ conjugates $\alpha_1$ to $\alpha_2$. Equivalently, $\beta_2$ conjugates $\alpha_2$ to $\alpha_1$.
The relation~\eqref{eq:primitive6} is equivalently transformed as
\begin{gather*}
 \alpha_1^{-1}\beta_1^{-1}\beta_2\big[(\beta_1\alpha_1)^{-1},\sigma\big]\beta_1 = \sigma, \\
 \alpha_1^{-1}\beta_1^{-1}\beta_2\big(\alpha_1^{-1}\beta_1^{-1}\beta_2\alpha_2\big)\beta_1 = \sigma, \\
 \big(\beta_1\alpha_1^{-1}\beta_1^{-1}\big)\beta_2\alpha_1^{-1}\beta_1^{-1}\beta_2\alpha_2 = \beta_1\sigma \beta_1^{-1}, \\
 \alpha_2^{-1}\beta_2\alpha_1^{-1}\beta_1^{-1}\beta_2\alpha_2 = \beta_1\sigma \beta_1^{-1},
\end{gather*}
where in the last line we used~\eqref{eq:simplified5}. Finally we get $\beta_2 (\beta_1\alpha_1)^{-1}\beta_2 = \operatorname{Ad}_{\alpha_2\beta_1} \sigma$. The relation~\eqref{eq:primitive7'} is equivalent to
\begin{align*}
 \beta_1 &= \alpha_1 [\sigma,\beta_1]\sigma [\beta_1\alpha_1,\sigma]
 =\alpha_1 \big(\beta_2\beta_1^{-1} \beta_2\big)\alpha_2\alpha_1^{-1}\beta_1^{-1}\sigma^{-1} \\
 &=\alpha_1 \beta_1 \alpha_2\big(\alpha_1^{-1}\beta_1^{-1}\big)\sigma^{-1} \qquad \mbox{(use \eqref{eq:simplified3})} \\
 &=\alpha_1 \beta_1 \alpha_2\big(\beta_1^{-1}\alpha_2^{-1}\big)\sigma^{-1}. \qquad \mbox{(use \eqref{eq:simplified5})}
\end{align*}
Thus we get $\beta_1 = \alpha_1 \big(\beta_1 \alpha_2\beta_1^{-1}\big)\alpha_2^{-1}\sigma^{-1}$. Summarizing the relations, we get the desired asser\-tion.
\end{proof}

\section{Generation by cluster Dehn twists}\label{sec:generation}
In this section, we verify that several (saturated) cluster modular groups of finite mutation type are generated by cluster Dehn twists. We will use the notation of~\cite{FeST12} and~\cite{FeST} for quivers of finite mutation type (recall Theorems~\ref{thm:FSTskewsym} and~\ref{thm:FSTgeneral}).

\subsection[Type $X_7$ and $X_6$]{Type $\boldsymbol{X_7}$ and $\boldsymbol{X_6}$}

Recall the presentations given in Theorems~\ref{introthm: X_7} and~\ref{introthm: X_6}.
Let us prove the following:

\begin{prop}\label{cor: Dehn twist} The cluster modular group of type $X_7$ is generated by the six cluster Dehn twists $\phi_k$ and $\psi_k$ $(k=1,3,5)$. The cluster modular group of type $X_6$ has a subgroup of index at most two generated by the four cluster Dehn twists $\alpha_k$ for $k=1,2,3,4$, where $\alpha_3:=\beta_1 \alpha_1 \beta_1^{-1}$ and $\alpha_4:=\sigma \alpha_3 \sigma^{-1}$.
\end{prop}

\begin{proof}It suffices to show that the corresponding saturated cluster modular groups have desired generators, since the cluster modular group is its quotient.

{\bf Type $\boldsymbol{X_7}$.} We can delete the permutations using the relations $\sigma_{35}=\psi_1^{-2}\phi_1$, $\sigma_{51}=\psi_3^{-2}\phi_3$ and $\sigma_{13}=\psi_5^{-2}\phi_5$. Hence the saturated cluster modular group is generated by $\psi_k$'s and $\phi_k$'s. The elements $\phi_k$ are clearly cluster Dehn twists by definition. The elements $\psi_k$ are also cluster Dehn twists, since $\psi_k^4 = \phi_k^2$.

{\bf Type $\boldsymbol{X_6}$.} The element $\alpha_1$ is clearly a cluster Dehn twist by definition. Note that any conjugate of a cluster Dehn twist is also a cluster Dehn twist. In particular, the elements $\alpha_2$, $\alpha_3$ and $\alpha_4$ are cluster Dehn twists.

From the relations
\begin{gather*}
 \beta_1 = \alpha_1 \big(\beta_1 \alpha_2\beta_1^{-1}\big)\alpha_2^{-1}\sigma^{-1}= \alpha_1\alpha_3\alpha_2^{-1}\sigma, \qquad
 \beta_2 = \sigma \beta_1 \sigma^{-1} =\alpha_2\alpha_4\alpha_1^{-1}\sigma,
\end{gather*}
any element $g \in \wG_{X_6}$ can be written as a product of the cluster Dehn twists $\alpha_k$ for $k=1,2,3,4$ and $\sigma$. Since $\sigma^2=1$ and we know the commutation relations of $\sigma$ and $\alpha_k$, we can move all the $\sigma$'s to the right and~$g$ can be written as a product of the cluster Dehn twists $\alpha_k$ for $k=1,2,3,4$ and only one $\sigma$ on the extreme right. Thus the subgroup of $\wG_{X_6}$ generated by the cluster Dehn twists $\alpha_k$ for $k=1,2,3,4$ has index~$2$.
\end{proof}

Note that for the $X_6$ case, it can possibly occur that the whole cluster modular group is generated by $\alpha_k$ for $k=1,2,3,4$ with a help of an additional ``non-standard'' relation involving~$\sigma$.

\subsection[Type $G_2^{(*,*)}$]{Type $\boldsymbol{G_2^{(*,*)}}$}
The weighted quiver representing the mutation class $G_2^{(*,*)}$ is shown in Fig.~\ref{fig: G_2}.
\begin{thm}[Fock--Goncharov {\cite[Corollary~4.2]{FG06a}}]
The saturated cluster modular group of type~$G_2^{(*,*)}$ is isomorphic to the braid group of type $G_2$:
\[
\wG_{G_2^{(*,*)}} \cong \langle a,b \,|\,(ab)^3=(ba)^3 \rangle.
\]
\end{thm}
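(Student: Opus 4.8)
The plan is to re-derive the presentation by the same route used for $X_6$ and $X_7$ in \cref{section: proof}, namely Brown's algorithm (\cref{thm Brown}) applied to the action of $\wG_{G_2^{(*,*)}}$ on the saturated modular complex $\wM_{G_2^{(*,*)}}$. That complex is simply-connected by the corresponding lemma of \cref{section: definitions}, and the action satisfies the non-inversion hypothesis \eqref{non-inverted} by \cref{without inversion}, so \cref{thm Brown} applies. Since $G_2^{(*,*)}$ is of finite mutation type \cite{FeST}, the mutation class $|G_2^{(*,*)}|$ is finite and the saturated modular graph $\widehat{G}_{G_2^{(*,*)}}$ is a finite graph by \cref{lem:finitegraph}. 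First I would enumerate the quivers in $|G_2^{(*,*)}|$ (conveniently with Keller's mutation applet, as elsewhere in the paper) and, using \cref{vertex class,edge class,face class}, tabulate the $\wG_{G_2^{(*,*)}}$-orbits of vertices, of oriented edges, and of standard cycles. The key structural point is that $G_2^{(*,*)}$ carries an arrow of weight $3$, so besides the square and pentagon cycles the complex also has \emph{octagon} cycles $C_8(k,l)$ coming from pairs $k,l$ with $\ve_{kl}=-3\ve_{lk}=3$, together with the hexagon cycles attached to weight-$2$ arrows; all of these must be represented in the chosen set $F$ of $2$-cell representatives.

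Next I would fix a data of representatives $(T,P,E^+,F)$: a spanning tree $T$ of $\widehat{G}_{G_2^{(*,*)}}$, a compatible orientation $P$ and edge-representative set $E^+\supseteq P(C_1(T))$, and a set $F$ of standard-cycle representatives. Then, exactly as in \cref{section: proof}, I would compute the isotropy groups at the vertices of $T$ (groups of simultaneous seed isomorphisms, i.e.\ small groups of quiver automorphisms here), choose elements $g_e$ realising \eqref{Brown generator} for each $e\in E^+$, transport all Brown generators to a single basepoint along $T$, and write down the isotropy relations and the face relations, one per cycle in $F$. A sequence of Tietze transformations should then eliminate all but two generators $a,b$; I expect $a$ and $b$ to be visibly of the form $((i\ j)\mu_j)^{\ell}$ in suitable seeds, hence cluster Dehn twists, while the remaining face relations collapse to the single octagon relation, which rewrites as $ababab=bababa$, that is $(ab)^3=(ba)^3$. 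This simultaneously reproves the isomorphism with the braid group of type $G_2$ and exhibits explicit cluster Dehn twist generators, which is the purpose of this appendix. (Alternatively one may simply invoke \cite[Corollary 4.2]{FG06a}, where this group is computed from the geometry of the associated cluster $\X$-variety; since this seed admits only standard relations, the saturated and ordinary cluster modular groups coincide.)

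The main obstacle will be the octagon face relations. Each $C_8(k,l)$ is represented by a length-$8$ combinatorial cycle, so realising the corresponding face relation amounts to composing eight mutations interleaved with seed permutations while tracking the induced quiver isomorphism at every step — and here, unlike in the $X_6$ and $X_7$ cases, the exchange matrix is not skew-symmetric, so the matrix-mutation rule \eqref{e-mutation} must be handled with the symmetrizers $d_i$ and one must keep careful account of which arrows carry weight. A secondary, more bureaucratic obstacle is checking that the chosen $T$, $E^+$ and $F$ genuinely exhaust the respective orbit sets via \cref{vertex class,edge class,face class}; this is a finite verification, again most efficiently carried out with computer assistance.
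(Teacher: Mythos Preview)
Your proposal is essentially correct and matches what the paper itself says: this theorem is quoted from \cite{FG06a} rather than proved, and immediately after the statement the paper remarks that it ``can be verified by a similar computation given in \cref{section: proof}''---i.e.\ by Brown's algorithm---while Fock--Goncharov themselves fixed a data of representatives (their ``spanning tree'') but finished by a ``more topological observation''. So your Brown-algorithm route is the one the paper endorses but does not carry out; your parenthetical alternative of simply invoking \cite[Corollary~4.2]{FG06a} is literally what the paper does.

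One small correction to your plan: there are no hexagon cycles in $\wM_{G_2^{(*,*)}}$. The symmetrizers are $(d_1,d_2,d_3,d_4)=(3,3,1,1)$, so for any pair $i,j$ the ratio $d_i/d_j$ is $1$ or $3$, never $2$; hence the $B_2$-type condition $\ve_{ij}=-2\ve_{ji}=2$ cannot occur anywhere in the mutation class. The standard cycles you must account for are squares, pentagons (between equal-weight vertices joined by a single arrow), and octagons (between a weight-$3$ and a weight-$1$ vertex). This does not affect your strategy, only the bookkeeping in assembling $F$. Also, do not expect the $G_2$ braid relation $(ab)^3=(ba)^3$ to drop out of a single octagon face relation verbatim: each $C_8$ cycle gives a length-$8$ word in the Brown generators, and the final two-generator presentation emerges only after Tietze transformations combining several face and isotropy relations, just as in the $X_6$ computation.
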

Although this theorem is obtained by a more topological observation, it can be also verified by a computation based on Section~ref{apply cluster} using the data of representative (which is called the ``spanning tree'') chosen \emph{loc.\ cit.}
\begin{prop}
The generators $a$ and $b$ are given by $a:=p_a((3\ 4)\mu_4)p_a^{-1}$ and $b:=\phi b'\phi^{-1}$, which are cluster Dehn twists.
Here $b':=p_b((1\ 2)\mu_2)p_b^{-1}$ and $\phi:=(3\ 4)\mu_4\mu_3\mu_4\mu_2\mu_1$ are elements of $\Gamma_{G_2^{(*,*)}}$, $p_a:=(1\ 2)(3\ 4)\mu_1$ and $p_b:=(3\ 4)\mu_4\mu_2\mu_1$ are mutation sequences. In particular the cluster modular group of type $G_2^{(*,*)}$ is generated by the two cluster Dehn twists $a$ and~$b$.
\end{prop}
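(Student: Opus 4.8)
The plan is to run Brown's algorithm (\cref{thm Brown}) for the action of $\wG_{G_2^{(*,*)}}$ on the saturated modular complex $\wM_{G_2^{(*,*)}}$, in exact parallel with the treatment of $X_7$ and $X_6$ in \cref{section: proof}, and to read off from the resulting Brown generators that the two generators of the braid-group presentation are realized by the stated mutation loops. First I would enumerate the finitely many quivers in the mutation class $|G_2^{(*,*)}|$ and assemble the finite saturated modular graph $\widehat{G}_{G_2^{(*,*)}}$ using \cref{lem:finitegraph,vertex class,edge class,face class}; this is a mechanical quiver-mutation computation that can be carried out with Keller's applet, and the ``spanning tree'' of \cite{FG06a} already supplies a compatible choice. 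Within this I would fix a data of representatives $(T,P,E^+,F)$, arranging that the basepoint vertex carries the quiver $Q_{G_2}$ of \cref{fig: G_2}, that $E^+$ contains one oriented edge labeled $4$ issuing from the vertex with quiver $p_a(Q_{G_2})$ and one oriented edge labeled $2$ issuing from the vertex with quiver $p_b(Q_{G_2})$, and that $\phi$ is the mutation loop obtained by changing the basepoint back to $Q_{G_2}$ along $T$. Computing the vertex isotropy groups (one checks directly from the quivers that they are trivial), the only Brown generators are the edge elements $g_e$, $e\in E^+$; the tree edges contribute $g_e=1$, and changing basepoints along $T$ turns the two remaining ones into $a=p_a((3\ 4)\mu_4)p_a^{-1}$ and $b'=p_b((1\ 2)\mu_2)p_b^{-1}$. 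Finally, eliminating the redundant generators and simplifying the face relations (the surviving one coming from the standard octagon cycle attached to the rank-$2$ $G_2$-subquiver) exactly as in the proofs of \cref{introthm: X_7,introthm: X_6}, one is left with a single relation, which after the change of generator $b=\phi b'\phi^{-1}$ becomes $(ab)^3=(ba)^3$. Thus $a$ and $b$ are the generators of the presentation furnished by the Fock-Goncharov theorem quoted above.

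Next I would verify that $a$ and $b$ are cluster Dehn twists. Conjugation by a mutation sequence is precisely the change-of-basepoint isomorphism of cluster modular groups, so viewed in $\Gamma_{p_a(Q_{G_2})}$ the loop $a$ is literally $(3\ 4)\mu_4$, and $b$ is conjugate in $\Gamma_{G_2^{(*,*)}}$ to $(1\ 2)\mu_2$ viewed in $\Gamma_{p_b(Q_{G_2})}$ (the extra conjugation by the loop $\phi$ only moves the basepoint and does not affect the conjugacy class). In the quiver $p_a(Q_{G_2})$ the full subquiver on $\{3,4\}$ is not of finite type --- one checks it is the Kronecker quiver --- so $(3\ 4)\mu_4$ generates an infinite cyclic subgroup, a copy of the cluster modular group of affine type $A_1^{(1)}$; hence $a$ has infinite order, and with $n=\ell=1$, $i=3$, $j=4$ it satisfies $a=((i\ j)\mu_j)^\ell$ in $\Gamma_{p_a(Q_{G_2})}$, so $a$ is a cluster Dehn twist. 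The same argument with the pair $\{1,2\}$ of $p_b(Q_{G_2})$ shows $b$ is a cluster Dehn twist. Since $a$ and $b$ generate $\wG_{G_2^{(*,*)}}$, and their images under the surjection $\wG_{G_2^{(*,*)}}\to\Gamma_{G_2^{(*,*)}}$ remain conjugate to $(3\ 4)\mu_4$ and $(1\ 2)\mu_2$ --- which already have infinite order in the honest cluster modular group of a Kronecker quiver --- the cluster modular group $\Gamma_{G_2^{(*,*)}}$ is generated by the two cluster Dehn twists $a$ and $b$.

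The only real obstacle is computational rather than conceptual: one must carry out the quiver mutations to pin down $|G_2^{(*,*)}|$, the saturated modular graph, a data of representatives, and the face relations, and then check that everything collapses to the braid relation --- work running along exactly the lines displayed in \cref{section: proof}, for which \cite{FG06a} provides a ready-made spanning tree. The point requiring the most care is the choice of the auxiliary mutation sequences $p_a$, $p_b$ and the correction loop $\phi$: they must be arranged so that the rank-$2$ subquivers on $\{3,4\}$ and on $\{1,2\}$ are genuine Kronecker quivers rather than $G_2$-quivers, since this is what makes $(3\ 4)\mu_4$ and $(1\ 2)\mu_2$ infinite-order and hence legitimate cluster Dehn twists, and it is precisely this requirement that dictates the explicit forms of $p_a$, $p_b$, and $\phi$.
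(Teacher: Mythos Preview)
Your approach is essentially the same as the paper's: identify $a$ and $b'$ as Brown generators for the action on $\wM_{G_2^{(*,*)}}$, and then check that at the seeds reached by $p_a^{-1}$ and $p_b^{-1}$ the relevant pair of vertices is joined by a double arrow (a Kronecker subquiver), so that $(3\ 4)\mu_4$ and $(1\ 2)\mu_2$ are infinite-order and hence cluster Dehn twists. The paper simply cites \cite{FG06a} for the data of representatives and the identification of $a,b'$ among the (five) Brown generators, and then points to the explicit quivers $Q_a$, $Q_b$ in \cref{fig: equivG_2} where the double arrows are visible; you instead sketch redoing the Brown computation and the reduction to the single braid relation, which is more than the proposition requires but harmless.
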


\begin{proof}The elements $a$ and $b'$ are two of the five Brown generators associated with the data of representatives fixed in~\cite{FG06a}. The mutation sequences $p_a^{-1}$ and $p_b^{-1}$ are the ones which connect~$G_2^{(*,*)}$ with the quivers $\bi_a$ and $\bi_b$ shown in Fig.~\ref{fig: equivG_2}, respectively. Then observe that the vertices~3 and~4 are connected by two arrows in the quiver $\bi_a$. Hence the element $(3\ 4)\mu_4 \in \Gamma_{\bi_a}$ is a cluster Dehn twist, so is $a \in \Gamma_{G_2^{(*,*)}}$. Similarly $b'$ is a cluster Dehn twist, so is $b=\phi b'\phi^{-1}$.
\end{proof}

\begin{figure}[t]\centering
\scalebox{0.9}{
\begin{tikzpicture}

\begin{scope}[>=latex]
\path (0,0) node[circle]{$3$} coordinate(A);
\path (2,0) node[circle]{$3$} coordinate(B);
\draw (0,0) circle[radius=0.2]node[above]{$1$};
\draw (2,0) circle[radius=0.2]node[above]{$2$};
\fill (0,-2) circle(2pt) coordinate(C) node[below]{$3$};
\fill (2,-2) circle(2pt) coordinate(D) node[below]{$4$};

\draw[->,shorten >=5pt,shorten <=5pt] (A) -- (B) [thick];
\draw[->,shorten >=5pt,shorten <=2pt] (C) -- (A) [thick];
\draw[->,shorten >=2pt,shorten <=5pt] (B) -- (C) [thick];
\draw[->,shorten >=5pt,shorten <=2pt] (D) -- (B) [thick];
\draw[->,shorten >=2pt,shorten <=2pt] (C) -- (D) [thick];

\end{scope}
\end{tikzpicture}
}
\caption{The quiver of type $G_2^{(*,*)}$.}\label{fig: G_2}
\end{figure}

\begin{figure}[t!]\centering
\scalebox{0.9}{
\begin{tikzpicture}
\begin{scope}[>=latex]
\path (0,0) node[circle]{$3$} coordinate(A);
\path (2,0) node[circle]{$3$} coordinate(B);
\draw (0,0) circle[radius=0.2]node[above]{$1$};
\draw (2,0) circle[radius=0.2]node[above]{$2$};
\fill (0,-2) circle(2pt) coordinate(C) node[below]{$3$};
\fill (2,-2) circle(2pt) coordinate(D) node[below]{$4$};
\draw(1,-3) node{$Q_a$};

\draw[->,shorten >=5pt,shorten <=5pt] (A) -- (B) [thick];
\draw[->,shorten >=2pt,shorten <=5pt] (A) -- (C) [thick];
\draw[->,shorten >=5pt,shorten <=2pt] (D) -- (A) [thick];
\draw[->,double,shorten >=2pt,shorten <=2pt] (C) -- (D) [thick];

\path (5,0) node[circle]{$3$} coordinate(A1);
\path (7,0) node[circle]{$3$} coordinate(B1);
\draw (5,0) circle[radius=0.2]node[above]{$1$};
\draw (7,0) circle[radius=0.2]node[above]{$2$};
\fill (5,-2) circle(2pt) coordinate(C1) node[below]{$3$};
\fill (7,-2) circle(2pt) coordinate(D1) node[below]{$4$};
\draw(6,-3) node{$Q_b$};

\draw[->,shorten >=5pt,shorten <=2pt] (C1) -- (B1) [thick];
\draw[->,shorten >=2pt,shorten <=5pt] (A1) -- (C1) [thick];
\draw[->,shorten >=2pt,shorten <=2pt] (C1) -- (D1) [thick];
\draw[->,double,shorten >=5pt,shorten <=5pt] (B1) -- (A1) [thick];
\end{scope}
\end{tikzpicture}}
\caption{Two quivers equivalent to $G_2^{(*,*)}$.}\label{fig: equivG_2}
\end{figure}

\subsection[Type $\tilde{E}_6$, $\tilde{E}_7$ and $\tilde{E}_8$]{Type $\boldsymbol{\tilde{E}_6}$, $\boldsymbol{\tilde{E}_7}$ and $\boldsymbol{\tilde{E}_8}$}

In~\cite{ASS}, they introduce the \emph{cluster automorphism groups} and compute them for quivers of finite or euclidean types using the cluster categories. It is known that if a quiver has no frozen vertices, then the cluster modular group is isomorphic to the \emph{group of direct cluster automorphisms} $\operatorname{Aut}^+(\A)$ (here $\A$ stands for the \emph{cluster algebra}), which is a subgroup of index $\leq 2$ of the cluster automorphism group. See~\cite{Fraser}. Hence we can rephrase the results shown in~\cite[Table~1]{ASS} as follows, restricting our attention to the quivers of affine and finite mutation type.

\begin{thm}[Assem--Schiffler--Shramchenko~\cite{ASS}]
The cluster modular groups of affine type $\tilde{E}_6$, $\tilde{E}_7$ and $\tilde{E}_8$ are given as follows:
\begin{itemize}\itemsep=0pt
\item $\Gamma_{\tilde{E}_6} \cong \mathbb{Z}\times \mathfrak{S}_3$.
\item $\Gamma_{\tilde{E}_7} \cong \mathbb{Z}\times\mathbb{Z}/2$.
\item $\Gamma_{\tilde{E}_8} \cong \mathbb{Z}$.
\end{itemize}
\end{thm}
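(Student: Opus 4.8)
The statement is a dictionary translation of the classification of cluster automorphism groups in \cite{ASS}, and the plan is simply to make that translation explicit. First I would pass from $\Gamma_{\tilde{E}_n}$ to a category-theoretic object: since $\tilde{E}_6$, $\tilde{E}_7$, $\tilde{E}_8$ are acyclic simply-laced quivers with no frozen vertices, by \cite{Fraser} the cluster modular group $\Gamma_{\tilde{E}_n}$ is isomorphic to the group $\mathrm{Aut}^+(\A)$ of \emph{direct} cluster automorphisms of the corresponding cluster algebra, which is a subgroup of index $\le 2$ in the full cluster automorphism group $\mathrm{Aut}(\A)$ of \cite{ASS}. So it is enough to read off, from \cite{ASS}, the group $\mathrm{Aut}(\A)$ for each euclidean type $\tilde{E}_n$ together with its subgroup of direct automorphisms.

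Second, I would recall the mechanism of the computation in \cite{ASS} in enough detail to extract the three answers. A cluster automorphism lifts to an autoequivalence of the cluster category $\C_Q = D^b(kQ)/\langle \tau^{-1}[1]\rangle$ of the path algebra $kQ$ of euclidean type. The Auslander--Reiten quiver of $\C_Q$ consists of a single transjective component (carrying the images of the preprojective and preinjective modules) together with a family of tubes whose ranks are determined by the type --- $(3,3,3)$ for $\tilde{E}_6$, $(2,4,4)$ for $\tilde{E}_7$, $(2,3,6)$ for $\tilde{E}_8$ --- and any autoequivalence must preserve this decomposition. The action of the Auslander--Reiten translation $\tau$ on the transjective component has infinite order and contributes the $\mathbb{Z}$-factor present in all three cases; the remaining finite factor is the group of symmetries of the underlying affine graph compatible with the tube ranks, which is $\mathfrak{S}_3$ for $\tilde{E}_6$ (the three arms of the star may be permuted freely), $\mathbb{Z}/2$ for $\tilde{E}_7$, and trivial for $\tilde{E}_8$. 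Since these graph symmetries commute with $\tau$, the group is a direct product, and one obtains $\mathrm{Aut}^+(\A) \cong \mathbb{Z}\times\mathfrak{S}_3$, $\mathbb{Z}\times\mathbb{Z}/2$ and $\mathbb{Z}$ respectively.

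The only point left to check is that passing from $\mathrm{Aut}(\A)$ to $\mathrm{Aut}^+(\A)$ does not alter these answers --- equivalently, that the generator of the $\mathbb{Z}$-factor and each graph symmetry are realized by genuine mutation loops rather than only by mutation-reversing automorphisms --- which is again part of the bookkeeping in \cite{ASS}. The main obstacle is precisely this categorical bookkeeping: correctly matching cluster automorphisms with autoequivalences of $\C_Q$ and tracking how $\tau$ and the diagram symmetries act on the transjective component and on the tubes of each $\tilde{E}_n$; since all of it is carried out in \cite{ASS}, I would simply quote it. As a self-contained alternative one could instead apply the method of \cref{section: proof}: the quivers $\tilde{E}_6$, $\tilde{E}_7$, $\tilde{E}_8$ are of finite mutation type, so by \cref{lem:finitegraph} the saturated modular graph $\widehat{G}_{|\ve|}$ is finite and Brown's algorithm yields a presentation of $\wG_{|\ve|}$; one would then still have to verify, via the criterion that the relations among mutations are generated by the standard ones, that $\wG_{|\ve|} \to \Gamma_{|\ve|}$ is an isomorphism. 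In practice the route through \cite{ASS} is considerably shorter.
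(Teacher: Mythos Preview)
Your approach is exactly the paper's: the paper does not give an independent proof of this theorem but simply cites Table~1 of \cite{ASS} together with \cite{Fraser} for the identification $\Gamma_{|\bi|} \cong \mathrm{Aut}^+(\A)$, which is precisely your plan. One minor inaccuracy in your sketch of the \cite{ASS} mechanism: the non-homogeneous tube ranks for tame hereditary algebras of type $\tilde{E}_6$, $\tilde{E}_7$, $\tilde{E}_8$ are $(2,3,3)$, $(2,3,4)$, $(2,3,5)$, not $(3,3,3)$, $(2,4,4)$, $(2,3,6)$; this does not affect your conclusion, since the finite factors are governed by the diagram automorphism groups of the affine graphs, which you identify correctly.
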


\begin{prop}The cluster modular groups of type $\tilde{E}_6$, $\tilde{E}_7$ and $\tilde{E}_8$ are generated by cluster Dehn twists.
\end{prop}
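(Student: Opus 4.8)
The plan is to write down, for each of the three types, a finite generating set all of whose members are cluster Dehn twists, using the isomorphisms $\Gamma_{\tilde E_6}\cong\mathbb Z\times\mathfrak S_3$, $\Gamma_{\tilde E_7}\cong\mathbb Z\times\mathbb Z/2$ and $\Gamma_{\tilde E_8}\cong\mathbb Z$ recorded above. In each case the $\mathbb Z$-factor is central, and an element has infinite order precisely when its $\mathbb Z$-component is nonzero; thus a cluster Dehn twist may have arbitrary image in the finite factor, which is what makes the statement plausible even though that finite factor is generated by torsion.

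The first step is to produce a central cluster Dehn twist. Since $\tilde E_6$, $\tilde E_7$ and $\tilde E_8$ are of finite mutation type by \cref{thm:FSTskewsym}, one enumerates the finitely many quivers in each mutation class with Keller's applet and locates one, say $\bj$, in which two vertices $i,j$ are joined by at least two arrows and $((i\ j)\mu_j)$ is a mutation loop. Then the rank-two behaviour at $\{i,j\}$ is of infinite type, so $((i\ j)\mu_j)$ has infinite order in $\Gamma_\bj\cong\Gamma_{\tilde E_n}$ and is, by definition, a cluster Dehn twist. A direct computation (tracking its action on the exchange graph, or composing with the explicit isomorphism of Assem--Schiffler--Shramchenko) identifies its image $k$ in the central $\mathbb Z$; if $|k|>1$ one repeats the search to obtain further multi-arrow quivers whose associated cluster Dehn twists have $\mathbb Z$-components with $\gcd$ equal to $1$, hence together generate the central $\mathbb Z$. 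Fix one such, $t$, with $t^{c}=((i\ j)\mu_j)^{\ell_0}$ for some $c,\ell_0\neq 0$. For $\tilde E_8$ this already finishes the argument. For $\tilde E_6$ and $\tilde E_7$, by the theorem above the finite factor is generated by diagram automorphisms $\tau_1,\dots,\tau_r$ of the chosen quiver (two transpositions for $\mathfrak S_3$, the nontrivial involution for $\mathbb Z/2$), each of some finite order $o_a$. Since $t$ is central, $t\tau_a$ has infinite order, commutes with $t$, and satisfies $(t\tau_a)^{o_a c}=t^{o_a c}=((i\ j)\mu_j)^{o_a\ell_0}$, so $t\tau_a$ is again a cluster Dehn twist; and $\langle t,\,t\tau_1,\dots,t\tau_r\rangle=\langle t,\tau_1,\dots,\tau_r\rangle=\Gamma_{\tilde E_n}$ (together with the finitely many extra cluster Dehn twists, if any, needed to fill out the $\mathbb Z$-factor), which gives the claim.

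The only non-formal ingredient, and hence the main obstacle, is the first step: one must actually exhibit a quiver with a multiple edge in each of the three mutation classes and check by explicit mutation computation both that the resulting element $((i\ j)\mu_j)^{\ell}$ is a mutation loop of infinite order and that the cluster Dehn twists so produced generate the \emph{whole} central $\mathbb Z$ rather than a proper subgroup of it. This is a finite verification, carried out with the Java applet of Keller exactly as in \cref{section: proof}; once it is in place, everything else is elementary group theory in $\mathbb Z\times(\text{finite})$.
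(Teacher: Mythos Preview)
Your approach is correct and rests on the same core observation the paper uses, but you take an unnecessary detour that introduces an unjustified step.

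The detour is the ``$\gcd=1$'' search. You worry that the $\mathbb Z$-component $k$ of the found element $((i\ j)\mu_j)$ might satisfy $|k|>1$, and propose hunting for further multi-arrow quivers until the $\mathbb Z$-components have $\gcd$ equal to $1$. You never argue that this search terminates successfully, so as written this is a gap. More importantly, it is not needed: the very common-power argument you invoke for $t\tau_a$ already shows that the central generator $(1,e)$ is itself a cluster Dehn twist, regardless of $k$. Indeed, writing $((i\ j)\mu_j)=(k,f')\in\mathbb Z\times F$ with $k\neq 0$, one has $(1,e)^{\,k|F|}=(k|F|,e)=((i\ j)\mu_j)^{|F|}$, so $(1,e)$ satisfies the definition of cluster Dehn twist with $n=k|F|$ and $\ell=|F|$. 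Thus you may simply take $t=(1,e)$ from the start.

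The paper's proof is exactly this observation, compressed to one line: in an infinite virtually cyclic group any two elements of infinite order share a nontrivial power, so exhibiting a single $((i\ j)\mu_j)$ of infinite order forces \emph{every} infinite-order element to be a cluster Dehn twist. Since each of $\mathbb Z$, $\mathbb Z\times\mathbb Z/2$, and $\mathbb Z\times\mathfrak S_3$ is generated by its elements of infinite order (e.g.\ by $(1,e)$ together with $(1,\tau)$ for each generator $\tau$ of the finite factor), the proposition follows. The only computational input is locating one quiver with a double arrow in each mutation class, which the paper does explicitly for $\tilde E_6$ and asserts for $\tilde E_7$, $\tilde E_8$; your proposal agrees on this point.
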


\begin{proof}
Since each group is virtually cyclic, it suffices to find one cluster Dehn twist $\phi$. Indeed, a sufficiently large power of another generator $\psi$ of infinite order lies in the cyclic group generated by $\phi$ and hence $\psi$ is also a cluster Dehn twist from the definition.

One can directly verify that the initial quiver of type $\tilde{E}_6$ is mutation-equivalent to the quiver shown in Fig.~\ref{fig: E_6}. Then we have two arrows from the vertex $4$ to $3$ in $\bi$, and other vertices $x$ are either disjoint from these vertices or connected in the form $3 \to x \to 4$.
Hence by the proof of \cite[Lemma~2.32]{Ish}, $\phi:=(3\ 4)\mu_3 \in \Gamma_\bi$ is a cluster Dehn twist. In each of the mutation classes $\tilde{E}_7$ and $\tilde{E}_8$, one can similarly find a quiver with a pair of vertices $u$, $v$ and two arrows from $v$ to $u$, such that the other vertices $x$ are either disjoint from them or connected in the form $u \to x \to v$. Hence we get a cluster Dehn twist.
\end{proof}

\begin{figure}[t!]\centering
\begin{tikzpicture}
\begin{scope}[>=latex]
\fill (0,0) circle(2pt) coordinate(A) node[below]{$1$};
\fill (1,0) circle(2pt) coordinate(B) node[below]{$2$};
\fill (2,1) circle(2pt) coordinate(C) node[above]{$3$};
\fill (2,-1) circle(2pt) coordinate(D) node[below]{$4$};
\fill (3,0) circle(2pt) coordinate(E) node[below]{$5$};
\fill (4,0) circle(2pt) coordinate(F) node[below]{$6$};
\fill (5,0) circle(2pt) coordinate(G) node[below]{$7$};

\draw[->,shorten >=2pt,shorten <=2pt] (B) -- (A) [thick];
\draw[->,shorten >=2pt,shorten <=2pt] (C) -- (B) [thick];
\draw[->,shorten >=2pt,shorten <=2pt] (B) -- (D) [thick];
\draw[->,shorten >=2pt,shorten <=2pt] (C) -- (E) [thick];
\draw[->,shorten >=2pt,shorten <=2pt] (E) -- (D) [thick];
\draw[->,shorten >=2pt,shorten <=2pt] (C) -- (F) [thick];
\draw[->,shorten >=2pt,shorten <=2pt] (F) -- (D) [thick];
\draw[->,shorten >=2pt,shorten <=2pt] (F) -- (G) [thick];
\draw[->,double,shorten >=2pt,shorten <=2pt] (D) -- (C) [thick];
\end{scope}
\end{tikzpicture}
\caption{A quiver equivalent to $\tilde{E}_6$.}\label{fig: E_6}
\end{figure}

\appendix

\section{Relations with the mapping class group of an annulus}\label{mapping class groups}
In this section, we investigate several relations between the cluster modular group of type~$X_7$ with the mapping class group of an annulus. Recall from Example~\ref{ex:MCG} and Theorem~\ref{thm:FSTskewsym} that an ideal triangulation~$\Delta$ of a marked surface $\Sigma$ gives rise to the seed $\bi_\Delta$ of finite mutation type. The following lemma is repeatedly used in this section.
\begin{prop}\label{elimination}
For a seed $\bi=(\ve,(A_i)_{i \in I}, (X_i)_{i \in I})$, define its unfrozen part to be
\begin{gather*}
 \bi_\uf:=\big((\ve_{ij})_{i,j \in I \setminus I_0},(A_i)_{i \in I \setminus I_0}, (X_i)_{i \in I \setminus I_0}\big).
\end{gather*}
Then we have a group homomorphism $e\colon \Gamma_{\bi} \to \Gamma_{\bi_\uf}$ which fits into the following exact sequence:
\[
1 \to \operatorname{Aut}_0(\bi) \to \Gamma_{\bi} \xrightarrow{e} \Gamma_{\bi_\uf},
\]
where $\operatorname{Aut}_0(\bi):=\{ \sigma \in \operatorname{Aut}(\bi)\,|\, \text{$\sigma(i)=i$ for all $i \in I-I_0$}\}$.
\end{prop}

\begin{figure}[t!]\centering
\begin{tikzpicture}
\begin{scope}[>=latex]
\fill (0,0) circle(2pt) coordinate(A) node[below]{$0$};
\fill (A) ++(120: 2) circle(2pt) coordinate(B) node[left]{$1$};
\fill (A) ++(60: 2) circle(2pt) coordinate(C) node[right]{$2$};

\draw[->,shorten >=2pt,shorten <=2pt] (A) -- (B) [thick];
\draw[->,double,shorten >=2pt,shorten <=2pt] (B) -- (C) [thick];
\draw[->,shorten >=2pt,shorten <=2pt] (C) -- (A) [thick];

\end{scope}
\end{tikzpicture}
\caption{The quiver $\bj$.}\label{fig: j}
\end{figure}

\begin{figure}[t!]\centering
\begin{tikzpicture}
\fill[gray] (0,0) circle [radius=0.5];
\draw (0,0) circle [radius=2];
\fill(0.5,0) circle(2pt) coordinate(A);
\fill(-0.5,0) circle(2pt) coordinate(B);
\coordinate(B') at (-1,0);
\coordinate(B+) at (0,1);
\coordinate(B-) at (0,-1);
\fill(2,0) circle(2pt) coordinate(C);
\coordinate(D') at (-1.5,0);
\coordinate(D+) at (0,1.5);
\coordinate(D-) at (0,-1.5);
\draw (A) to[out=90, in=0] (B+) to[out=180, in=90] (B') to[out=270, in=180] (B-) node[above]{0} to[out=0, in=270] (A) ;
\draw (A) -- node[above]{2} (C) ;
\draw (A) to[out=60, in=0] node[above]{1} (D+) to[out=180, in=90] (D') to[out=270, in=180] (D-) to[out=0, in=210] (C);
\end{tikzpicture}
\caption{The ideal triangulation $\Delta$ of the annulus with (2+1) marked points.}
\label{fig: annulus}
\end{figure}

We call the homomorphism $e\colon \Gamma_{\bi} \to \Gamma_{\bi_\uf}$ the \emph{elimination homomorphism}.
\begin{proof}Each element $\phi \in \Gamma_\bi$ can be written as $\phi=\seq$, where $i_1,\dots,i_k \in I-I_0$ and $\sigma$ is a seed isomorphism. Here we can write $\sigma=\sigma_0\sigma_\uf$, where $\sigma_0$ is a permutation of $I_0$ and $\sigma_\uf$ is a permutation of $I-I_0$. Then we define $e(\phi):=\sigma_\uf\mu_{i_l}\cdots\mu_{i_1} \in \Gamma_{\bi_\uf}$. The map $e$ is a group homomorphism, since the permutation $\sigma_0$ commutes with mutations. Moreover $e(\phi)=1$ if and only if $\phi=\sigma_0 \in \operatorname{Aut}_0(\bi)$. Thus we get the desired exact sequence.
\end{proof}

Similarly we can define the elimination homomorphism between the corresponding saturated cluster modular groups, and we have a similar exact sequence. The elimination homomorphism is not surjective in general, and the image $e(\Gamma_\bs) \subset \Gamma_{\bi_\uf}$ can have infinite index.

Let $\bj$ be the quiver shown in Fig.~\ref{fig: j}. Note that it gives a subquiver of both $X_6$ and $X_7$.
\begin{lem}
The quiver $\bj$ coincides with the unfrozen part of the quiver associated with the ideal triangulation $\Delta$ of the annulus $S$ with $(1+2)$ marked points shown in Fig.~{\rm \ref{fig: annulus}}. Moreover, the elimination homomorphism induces an isomorphism $e\colon \Gamma_{\bi_\Delta}={\rm MC}(S) \xrightarrow{\sim} \Gamma_{\bj}$. Under this isomorphism, the generator $s \in {\rm MC}(S) \cong \mathbb{Z}$ which rotates the two marked points on one boundary component $($called a ``half twist''$)$ corresponds to the element $(0\ 1\ 2)\mu_2\mu_1\mu_0 \in \Gamma_{\bj}$. The Dehn twist $t:=s^2 \in {\rm MC}(S)$ corresponds to the element $(1\ 2)\mu_1 \in \Gamma_{\bj}$.
\end{lem}

\begin{proof}The injectivity is easily verified using the exact sequence in Proposition~\ref{elimination}. It can be verified that $\Gamma_{\bj}$ is generated by the element $(0\ 1\ 2)\mu_2\mu_1\mu_0$, by observing that the mutation class $|\bj|$ consists of two seeds and the element acts on the set $\operatorname{Mat}^{-1}(\bj) \subset V\big(\wM_{|\bj|}\big)$ transitively. Then the surjectivity follows from the third statement. The verifications of the third and fourth statements are straightforward.
\end{proof}

\begin{rem}
We have $\wG_{\bi_\Delta}\cong \Gamma_{\bi_\Delta}$ in this case, see Theorem 9.17 in~\cite{FST08}. Hence we also have $\wG_{\bj} \cong \Gamma_{\bj}$.
\end{rem}

Let $\Gamma_{X_7}^{(12)}$ be the subgroup of $\Gamma_{X_7}$ which consists of elements of the form $\phi=\seq$, where $i_1,\dots, i_k \in \{0,1,2\}$ and $\sigma$ is a permutation such that $\sigma(\{0,1,2\})=\{0,1,2\}$. Note that the group $\Gamma_{X_7}^{(12)}$ is naturally identified with the cluster modular group of the seed obtained from the seed $X_7$ by freezing the vertices $\{3,4,5,6\}$.

\begin{lem}
We have a split exact sequence
\[
1 \to \mathbb{Z}/2 \xrightarrow{i^{(12)}} \Gamma_{X_7}^{(12)} \xrightarrow{e^{(12)}} \Gamma_{\bj}={\rm MC}(S) \to 1.
\]Here $i^{(12)}$ is given by $1 \mapsto \sigma_{35}=(3\ 5)(4\ 6)$ and $e^{(12)}$ is the elimination homomorphism.
\end{lem}

\begin{proof}The exactness of the sequence follows from Proposition~\ref{elimination} and $s=e^{(12)}(\psi_1)$. The commutativity relations $\psi_1\sigma_{35}=\sigma_{35}\psi_1$ and $\phi_1\sigma_{35}=\sigma_{35}\phi_1$ in Theorem~\ref{introthm: X_7} show that the image of $i^{(12)}$ is central. Since ${\rm MC}(S)\cong \mathbb{Z}$, there exists a section of~$e^{(12)}$.
\end{proof}

Having this lemma in mind, we write ${\rm MC}(S)^{(12)}:=\Gamma_{X_7}^{(12)} \cong {\rm MC}(S) \times \mathbb{Z}/2$.
The relation $t=s^2$ in ${\rm MC}(S)$ corresponds to the relation $\phi_1=\psi_1^2\sigma_{35}$ in ${\rm MC}(S)^{(12)}$.

Similarly we define the subgroups ${\rm MC}(S)^{(34)}$, ${\rm MC}(S)^{(56)}$ of the cluster modular group $\Gamma_{X_7}$. They are isomorphic to ${\rm MC}(S)\times \mathbb{Z}/2$. Summarizing, we have proved the following geometric interpretation of Proposition~\ref{cor: Dehn twist} in terms of these extended mapping class groups:

\begin{thm}The extended mapping class groups ${\rm MC}(S)^{(12)}$, ${\rm MC}(S)^{(34)}$ and ${\rm MC}(S)^{(56)}$ ge\-ne\-rate the cluster modular group $\Gamma_{X_7}$. The six cluster Dehn twists given in Proposition~{\rm \ref{cor: Dehn twist}} corresponds to the half twist and Dehn twist in ${\rm MC}(S)$ via the elimination homomorphisms.
\end{thm}

\subsection*{Acknowledgements}
The author would like to express his gratitude to his supervisor, Nariya Kawazumi for continuous encouragement during this work. He is very grateful to Travis Scrimshaw for pointing out an error in the presentation of $\wG_{X_6}$ in the first version of this paper. Most of computations in this paper are done by using the \emph{Java applet for quiver mutations} provided by Bernhard Keller, which is available at \url{http://www.math.lsa.umich.edu/~fomin/cluster.html}. This work is partially supported by JSPS KAKENHI Grant Number 18J13304 and the program for Leading Graduate School, MEXT, Japan.

\pdfbookmark[1]{References}{ref}
\LastPageEnding

\end{document}